\title[ The deformed Hermitian-Yang-Mills equation
on the blowup of $\mathbb P^n$]{The deformed Hermitian-Yang-Mills equation
on the blowup of $\mathbb P^n$}
\author[A. Jacob]{Adam Jacob*}
\thanks{$^{*}$Supported in part by a Simons Collaboration Grant.}
 \address{Department of Mathematics, University of California Davis, 1 Shields Ave., Davis, CA, 95616}
 \email{ajacob@math.ucdavis.edu}
\author[N. Sheu]{Norman Sheu}
 \email{normansheu@math.ucdavis.edu}
\numberwithin{equation}{section}
\flushbottom \thispagestyle{empty} \pagestyle{plain}
\renewcommand{\thanks}[1]{\footnote{#1}} 
\newcommand{\be}{\begin{equation}}
\newcommand{\bea}{\begin{eqnarray}}
\newcommand{\eea}{\end{eqnarray}} \newcommand{\ee}{\end{equation}}
 \def\ba{\begin{eqnarray}}
\def\ea{\end{eqnarray}}
\def\ra{\rightarrow}
\def\al{\alpha}
\def\ti{\tilde}
\def\ra{\rightarrow}
\def\[{{\bf [}}
\def\]{{\bf ]}}
\begin{document}
\newtheorem{theorem}{Theorem}
\newtheorem{proposition}{Proposition}
\newtheorem{lemma}{Lemma}
\newtheorem{corollary}{Corollary}
\newtheorem{definition}{Definition}
\newtheorem{conjecture}{Conjecture}
\newtheorem{example}{Example}
\newtheorem{claim}{Claim}
\newtheorem{assumption}{Assumption}
\theoremstyle{plain}

\maketitle

\vspace{.1in}

\begin{abstract}
We study the deformed Hermitian-Yang-Mills equation on the blowup of complex projective space.  Using symmetry, we express the equation as an ODE which can be solved using combinatorial methods if an algebraic    stability condition is satisfied.  This gives  evidence towards a conjecture of the first author, T.C. Collins, and S.-T. Yau on general compact K\"ahler manifolds.  \end{abstract}

\begin{normalsize}

\section{Introduction}

This paper explores the relationship between stability and solutions to the deformed Hermitian-Yang-Mills  equation.  Let $(X,\omega)$ be a compact K\"ahler manifold, and $[\alpha]\in H^{1,1}(X,\mathbb R)$ a real cohomology class.  The class $[\alpha]$ solves the   deformed Hermitian-Yang-Mills  equation if it admits  a representative $\alpha\in[\alpha]$ satisfying
\be
\label{dHYM1}
{\rm Im}(e^{- i\hat\theta}(\omega+i\alpha)^n)=0,
\ee 
where $e^{i\hat\theta}\in S^1$ is a fixed constant. Fixing  $\alpha_0\in [\alpha]$, by the $\partial\bar\partial$-Lemma, any other representative of this class can be written as $\alpha=\alpha_0+i\partial\bar\partial \phi$ for some real function $\phi$, and so \eqref{dHYM1} is an elliptic, fully nonlinear equation for $\phi$.

A complex analogue of the special Lagrangian graph equation,  equation \eqref{dHYM1} was derived by Mari\~no-Minasian-Moore-Strominger by studying equations of motion for BPS $B$-branes \cite{MMMS}.  Taking a more geometric viewpoint, Leung-Yau-Zaslow   derived this equation by looking at the mirror of special Lagrangian graphs under the semi-flat setup of SYZ  mirror symmetry \cite{LYZ}. Recently, the question of how existence of solutions to  dHYM equation may relate to various algebraic stability conditions has garnered  significant attention, due to exciting relationships with other equations arising in complex geometry, and furthermore due to how such stability conditions may shed light on the existence problem for special Lagrangian submanifolds in Calabi-Yau manifolds.

Initial attempts to solve equation \eqref{dHYM1} were undertaken in \cite{JY} and later \cite{CJY}, and relied on certain analytic assumptions, namely that the class $[\alpha]$ admitted a representative that satisfied a positivity condition. This lead  to the natural question of whether solvability can be determined by an algebraic  condition on the classes $[\alpha]$ and $[\omega]$ alone. Following the work of  Lejmi-Sz\'ekelyhidi on the $J$-equation \cite{LS}, the first author, along with T.C. Collins and   S.-T. Yau, integrated the positivity condition along subvarieties to develop a necessary class condition for existence, and conjectured it was a sufficient condition as well  \cite{CJY}. We formally state this conjecture. First, for an analytic subvariety $V\subseteq X$,   define the complex number
\be
Z_{[\alpha][\omega]}(V):=-\int_V e^{-i\omega+\alpha},\nonumber
\ee
where by convention we only integrate the term in the expansion of order ${\rm dim}(V)$. By the $\partial\bar\partial$-Lemma $Z_{[\alpha][\omega]}(V)$ is independent of a choice of representative from $[\omega]$ or $[\alpha]$.  The main results of \cite{CJY} rely on an assumption referred to as {\it supercritical phase}, which assumes that the constant $\hat\theta$ can be lifted to $\mathbb R$ to lie within the interval $((n-2)\frac\pi 2,n\frac\pi2)$. Therefore we state the conjecture with this assumption:

\begin{conjecture}[Collins-J-Yau \cite{CJY}] \label{theconjecture}
The  cohomology class $[\alpha]\in H^{1,1}(X,\mathbb R)$ on a compact K\"ahler manifold $(X,\omega)$ admits a solution to the deformed Hermitian-Yang-Mills equation \eqref{dHYM1} (with supercritical phase) if and only if $Z(X)\neq 0$, and for all analytic subvarieties $V\subset X$,
\be
\label{stabconj}
{\rm Im}\left(\frac{Z_{[\alpha][\omega]}(V)}{Z_{[\alpha][\omega]}(X)}\right)>0.
\ee
\end{conjecture}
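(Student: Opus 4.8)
\emph{Strategy.} I would treat the two implications separately. The necessity direction --- that a solution forces $Z(X)\neq 0$ and $\Im(Z_{[\alpha][\omega]}(V)/Z_{[\alpha][\omega]}(X))>0$ for every proper analytic subvariety $V$ --- is essentially the class obstruction derived in \cite{CJY}: after rotating so that $Z_{[\alpha][\omega]}(X)$ is real and positive, a solution $\alpha$ of \eqref{dHYM1} forces $\omega+i\alpha$ to lie pointwise in a positivity cone whose restriction to $V$ has integral $\int_V(\omega+i\alpha)^{\dim V}$ with positive imaginary part, and integrating this positivity over $V$ yields \eqref{stabconj}. So the real content is sufficiency, and the first reduction is to the eigenvalue description of \eqref{dHYM1}: writing $\lambda_1,\dots,\lambda_n$ for the eigenvalues of $\alpha$ with respect to $\omega$, equation \eqref{dHYM1} becomes $\Theta(\lambda):=\sum_{j}\arctan\lambda_j=\hat\theta \pmod \pi$, and the supercritical hypothesis $\hat\theta\in((n-2)\tfrac\pi2,n\tfrac\pi2)$ guarantees that $\{\Theta=\hat\theta\}$ bounds a convex region in eigenvalue space, so that the operator is concave and the equation is elliptic along any path staying in this regime.

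The plan for sufficiency is a two-step scheme. \textbf{(a)} First I would reduce the existence of a genuine solution to the existence of a \emph{subsolution}, i.e.\ a representative $\underline\alpha\in[\alpha]$ whose eigenvalues $\underline\lambda$ satisfy, at every point and for each index $i$, the cone condition $\sum_{j\neq i}\arctan\underline\lambda_j > \hat\theta-\tfrac\pi2$. Granting such a subsolution, the method of continuity --- deforming the phase, or deforming the class toward $\underline\alpha$ --- together with the a priori estimates closes openness and closedness: the $C^0$ bound via an ABP/Moser argument anchored on the subsolution, and the second-order estimate via the concavity of $\Theta$ in the supercritical range. This step follows the analytic framework already in place for \eqref{dHYM1} once a subsolution is produced.

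\textbf{(b)} The crux is therefore a Nakai--Moishezon type theorem: the numerical stability condition \eqref{stabconj} over \emph{all} subvarieties must imply the existence of a subsolution in $[\alpha]$. I would argue by contradiction in the spirit of Lejmi--Sz\'ekelyhidi \cite{LS} and its later resolution for the $J$-equation. If no subsolution exists, a convex-duality argument on the space of potentials should produce a nonzero positive current $T$ concentrated on an analytic set; from its support one extracts --- via upper semicontinuity of Lelong numbers and a stratification of the singular locus --- an irreducible subvariety $V$ along which the integrated positivity degenerates, contradicting \eqref{stabconj} for that $V$.

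The principal obstacle, and the reason the general case remains beyond reach, is step \textbf{(b)}. Unlike the $J$-equation, the dHYM positivity cone cut out by $\Theta$ is genuinely nonlinear and is not the pullback of a cone of $(1,1)$-classes, so the quantities one must control are not products of cohomology classes but the period-type integrals $Z_{[\alpha][\omega]}(V)$, which mix all degrees in the expansion of $e^{-i\omega+\alpha}$. Ensuring that the destabilizing current is supported on an honest integral analytic subvariety, and that passing to it preserves the \emph{strict} inequality rather than its non-strict limit, is the delicate point; a secondary analytic difficulty is maintaining the uniform $C^0$ estimate as the phase is allowed to approach the boundary $(n-2)\tfrac\pi2$ of the supercritical range, where ellipticity degenerates.
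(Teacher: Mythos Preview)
The statement you are trying to prove is a \emph{conjecture}, and the paper does not prove it in the generality you address.  What the paper actually establishes is the special case $X=\mathrm{Bl}_{x_0}\mathbb{P}^n$ (Theorem~\ref{maintheorem}), by a method unrelated to your outline: Calabi symmetry reduces the dHYM equation to an exact ODE on $[1,a]$ whose solutions are graphical pieces of the level curves of $\Phi(x,y)={\rm Im}\bigl(e^{-i\hat\theta}(x+iy)^n\bigr)$; the stability inequalities are then read as combinatorial constraints on where the two boundary points $1+iq$ and $a+ip$ may sit relative to certain families of rays in the right half-plane, and an elementary induction (Proposition~\ref{comb} and Proposition~\ref{mainprop}) forces them onto the same connected component of the level set with no vertical tangent in between.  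No subsolution is built, no continuity method is run, and no duality or current argument appears.

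Your outline is the natural program for the full conjecture---necessity from \cite{CJY}, sufficiency via reduction to a subsolution plus a Nakai--Moishezon step extracting a destabilizing subvariety from the failure of a subsolution---and the necessity and step~(a) portions are accurate summaries of existing machinery.  But step~(b) is not a proof sketch; it is a wish.  You yourself say the passage from ``no subsolution'' to ``a destabilizing $V$ violating \eqref{stabconj}'' is the open part, and nothing in the proposal closes it: the convex-duality/current extraction you gesture at has no mechanism here for producing an \emph{irreducible} subvariety on which the strict inequality fails, precisely because the dHYM cone condition is not a class-wise product inequality.  So the proposal is an identification of the difficulty, not a resolution of it.  If your target were only the paper's special case, the ODE argument is both elementary and complete, whereas your machinery is simultaneously heavier and still incomplete on that example.
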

Without the supercritical phase assumption a stability conjecture can still be formulated, although as opposed to \eqref{stabconj} the inequality will be of a slightly  different form, as discussed below.

Following the above work, Collins-Yau subsequently  constructed a more robust necessary condition for existence, for which the above conjecture is only a special case \cite{CY}. Their approach follows an infinite dimensional GIT picture, and looks at the limiting behavior of geodesics in the space of potentials for $[\alpha]$, in conjunction with the behavior of various functionals.  Overall, the viewpoint of this work is   that any stability condition for \eqref{dHYM1}  should arise naturally as an obstruction to existence.  Colins-Yau also relate their work to other conjectured  stability conditions for similar problems, including Bridgeland stability. We direct the interested reader to \cite{CY} for more details on their stability condition as it relates to Bridgeland stability,  and instead only focus on Conjecture \ref{theconjecture}.  

In this paper work on the blowup of complex projective space. We find a stability condition, which is a generalization of \eqref{stabconj} in the non supercritical phase case, and demonstrate that stability  is sufficient for existence of a solution.
\begin{theorem}
\label{maintheorem}
Let $X$ be the blowup of $\mathbb P^n$ at a point. Let $[\omega]$ be any K\"ahler class on $X$, and  $[\alpha]$ any real cohomology class. Then if  $Z(X)\neq 0$, and if for each $k\in\{1,...,n-1\}$   all analytic subvarieties $V^k\subset X$ of dimension $k$ satisfy either
\be
\label{stabconj2}
{\rm Im}\left(\frac{Z_{[\alpha][\omega]}(V^k)}{Z_{[\alpha][\omega]}(X)}\right)>0\qquad{\rm or}\qquad {\rm Im}\left(\frac{Z_{[\alpha][\omega]}(V^k)}{Z_{[\alpha][\omega]}(X)}\right)<0,
\ee
then $[\alpha]$ admits a solution to the deformed Hermitian-Yang-Mills equation. 
\end{theorem}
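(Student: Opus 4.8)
The plan is to exploit the large symmetry group of $X=\mathrm{Bl}_p\mathbb P^n$. Since both $[\omega]$ and $[\alpha]$ can be chosen $U(n)$-invariant (the torus acting on $\mathbb P^n$ fixing the blown-up point lifts to $X$), we may reduce the fully nonlinear PDE \eqref{dHYM1} to an ODE in the single radial variable coming from the momentum map / Fubini–Study coordinate. Concretely, on $X\setminus E$ we identify $X$ with a region in the total space of $\mathcal O(-1)\to\mathbb P^{n-1}$, use the Calabi ansatz, and write the invariant representatives $\alpha_0+i\partial\bar\partial\phi$ in terms of a function $\phi$ of the real variable $x$ parametrizing the $\mathbb P^1$-fibers; then $\eqref{dHYM1}$ becomes a first-order ODE for $u=\phi'(x)$ (or equivalently for the Lagrangian phase) on a fixed interval, with boundary conditions dictated by the requirement that the form extend smoothly across $E$ and across the zero section at the other end. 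This is the content of the earlier sections of the paper (the reduction to an ODE), which I am free to assume.

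Next I would analyze solvability of this ODE. The equation, after the substitution, should take the form that the primitive of a rational (algebraic) function of $u$ equals a linear function of $x$, so that existence of a smooth solution on the closed interval with the correct boundary values becomes an equality/inequality among the values of an explicit integral at the endpoints — precisely a condition on the cohomological data $[\omega],[\alpha]$, i.e. on the numbers $Z_{[\alpha][\omega]}(V)$. The key step is to \emph{match} the branches: the Lagrangian angle $\Theta(x)=\sum_j \arctan(\text{eigenvalue}_j)$ must interpolate monotonically between its boundary values, and the total change in $\Theta$ across the interval is pinned by $\hat\theta$, which in turn is computed from $Z(X)$. One shows that the monotone solution exists iff at each ``level'' $k$ the partial sums of arctangents stay on the correct side — and this is exactly where the two alternatives in \eqref{stabconj2} enter: for $X$ a one-point blowup the only irreducible invariant subvarieties are the exceptional divisor $E\cong\mathbb P^{n-1}$ (and its linear subspaces) and the proper transforms of linear $\mathbb P^k\subset\mathbb P^n$ through $p$, so the subvariety conditions reduce to finitely many inequalities on the endpoints of the integral, which I would translate one-by-one into sign conditions on $\mathrm{Im}(Z(V^k)/Z(X))$.

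The main obstacle, and the combinatorial heart of the argument, is the branch-matching: the integrand $\Theta'(x)$ can vanish or the ODE can develop the wrong monotonicity, and one must show that the stability hypothesis \eqref{stabconj2} is exactly strong enough to prevent this while being weak enough to be implied by (rather than equivalent to) an invariant representative existing. I expect to handle this by a careful case analysis on the ordering of the ``turning points'' of the rational integrand — grouping them according to the dimension stratum $k$ they come from — and an induction on $k$ showing that each successive inequality among the $Z(V^k)$ controls the sign of one factor; the ``or'' in \eqref{stabconj2} (as opposed to the strict $>0$ of Conjecture \ref{theconjecture}) is forced precisely because in the non-supercritical range the phase $\hat\theta$ need not lie in $((n-2)\tfrac\pi2, n\tfrac\pi2)$, so the relevant partial sums of arctangents may legitimately land on either side of the reference value. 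Once the ODE is solved on the open interval with matching boundary data, smoothness across $E$ and across the zero section follows from the standard extension criterion for $U(n)$-invariant forms under the Calabi ansatz, completing the construction of the solution $\alpha\in[\alpha]$ to \eqref{dHYM1}.
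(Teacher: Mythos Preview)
Your setup is right --- Calabi symmetry reduces the problem to a first-order ODE for $f$ on $[1,a]$ with boundary values $f(1)=q$, $f(a)=p$ --- and you correctly anticipate that the stability inequalities enter through an induction on the dimension $k$. But the middle of your argument rests on a misconception. You write that the Lagrangian angle $\Theta(x)=\sum_j\arctan(\lambda_j)$ ``must interpolate monotonically between its boundary values'' and worry about where $\Theta'(x)$ vanishes. For a solution of the dHYM equation, $\Theta(x)$ is \emph{identically} equal to $\hat\theta$; there is no interpolation and no monotonicity of the phase to exploit. The ODE is not solved by a shooting or monotonicity argument on $\Theta$.

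What actually makes the ODE tractable --- and what your sketch never invokes --- is that it is \emph{exact}: after multiplying by $x^{n-1}$,
\[
\mathrm{Im}\bigl(e^{-i\hat\theta}(x+if)^{n-1}(1+if')\bigr)=\frac{d}{dx}\,\mathrm{Im}\Bigl(e^{-i\hat\theta}\tfrac{(x+if)^n}{n}\Bigr),
\]
so every solution is a graphical piece of a level curve of the explicit real polynomial $\Phi(x,y)=\mathrm{Im}\bigl(e^{-i\hat\theta}(x+iy)^n\bigr)$. The two boundary points $(1,q)$ and $(a,p)$ automatically lie on the same level set $\Phi=c$ (a two-line computation from the definition of $\hat\theta$), and the whole existence question becomes purely geometric: do $z_1=1+iq$ and $z_2=a+ip$ lie on the same \emph{connected component} of $\{\Phi=c\}$, and is that component graphical over $[1,a]$ (no vertical tangent)? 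The level set consists of $n$ disjoint curves sitting in alternating sectors bounded by the rays $\mathcal R_n=\{\mathrm{Im}(e^{-i\hat\theta}z^n)=0\}$, and vertical tangents occur exactly along the rays $\mathcal R_{n-1}=\{\mathrm{Im}(ie^{-i\hat\theta}z^{n-1})=0\}$.

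The combinatorial heart is then a precise interlacing lemma: for every $k$, between any two adjacent rays of $\mathcal R_k$ there lies exactly one ray of $\mathcal R_{k-1}$. The induction runs \emph{downward} by contradiction: if $z_1$ and $z_2$ were on different components, some ray of $\mathcal R_{n-1}$ would separate them; stability at level $k$ (either sign) places $z_1,z_2$ in the same alternating family of $\mathcal R_k$-sectors, so one separating ray of $\mathcal R_k$ forces two, hence by interlacing a separating ray of $\mathcal R_{k-1}$; continuing to $k=1$ contradicts stability there. Stability at $k=n-1$ then also rules out a vertical tangent between $z_1$ and $z_2$. Your proposal gestures at an induction on $k$, but frames it via ``turning points of a rational integrand'' and ``partial sums of arctangents staying on the correct side'', neither of which is the operative mechanism. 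Without the exactness/level-curve picture and the ray-interlacing lemma, the argument as outlined does not close.
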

We reiterate that for different dimensions $k$, we allow for the inequality in \eqref{stabconj2} to be either positive or negative. However, for a fixed $k$, all subvarieties of that dimension  must give the same sign. We note that in the supercritical phase case, only the strictly positive inequality is possible,  and so our condition \eqref{stabconj2} reduces to  \eqref{stabconj}, proving  Conjecture \ref{theconjecture} in this case.

To prove our theorem, we make use of the fact that on $X$, both $[\omega]$ and $[\alpha]$ admit representatives that satisfy a particular symmetry called {\it Calabi Symmetry}. Originally studied by Calabi to construct examples of extremal K\"ahler metrics \cite{Ca}, this symmetry has since been employed to study many other geometric equations, including the K\"ahler Ricci flow \cite{Song,SW2,SW3,SW4}, metric flips \cite{SY}, and the inverse $\sigma_k$ equations \cite{FL}. The advantage of working with Calabi Symmetry is that allows us to write equation \eqref{dHYM1} as an ODE over a closed interval in $\mathbb R$,  with a two sided boundary conditions determined by the classes $[\omega]$ and $[\alpha]$. Thus the question of existence  is reduced to solving the boundary valued ODE. Of course, by existence and uniqueness of solutions to ODEs we can always find a solution matching one boundary value, so the difficulty is determining when the other boundary value matches up. This is where stability comes into play, and we use \eqref{stabconj2} to force the boundary values into certain configurations where a solution will always exist.

While this theorem demonstrates that \eqref{stabconj2} is a sufficient condition for existence, it is not clear it is necessary. As noted above, outside of the supercritcal phase case,   \eqref{stabconj2} does not match the necessary condition  for existence presented in \cite{CJY}. To elaborate, let the average angle of a subvariety $V^k$ be defined by the argument of  $\int_{V^k}(\omega+i\alpha)^k$, and  denote this argument by $\hat\Theta_{V^k}$. In   \cite{CJY} it is demonstrated that  any class that solves \eqref{dHYM1} must satisfy
\be
 \hat\Theta_{V^{k}} >\hat\theta-(n-k)\frac\pi2.\nonumber
\ee
In fact, assuming supercritical phase the above inequality is equivalent to \eqref{stabconj}. However, outside of   supercritical phase, one needs to specify a unique lift of $\hat\theta$ to $\mathbb R$, before a necessary condition similar to the above can be generalized. If such a lift exists, then again a solution to equation \eqref{dHYM1} will imply
\be
\label{liftedstab}
\hat\theta+(n-k)\frac\pi2>\hat\Theta_{V^{k}} >\hat\theta-(n-k)\frac\pi2.
\ee
When $k=n-1$, we find  the above inequality is a stronger condition than \eqref{stabconj2}, whereas for $k<n-1$ the conditions fail to match. Nevertheless, we are able to demonstrate:
\begin{theorem}
\label{theorem2}
Let $X$ be the blowup of $\mathbb P^n$ at a point. Let $[\omega]$ be any K\"ahler class on $X$, and  $[\alpha]$ any real cohomology class. Then  $[\alpha]$ admits a solution to the deformed Hermitian-Yang-Mills equation if and only if 
\begin{enumerate}
    		\item The average angle $\hat\theta$ has a unique lift to $\mathbb R.$
    		\item For every divisor $V^{n-1}\subset X$, the average angle $\hat\Theta_{V^{n-1}}$ satisfies \eqref{liftedstab}.   	
		\end{enumerate}
\end{theorem}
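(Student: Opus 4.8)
The plan is to leverage the ODE reduction already established for Theorem \ref{maintheorem} and track more precisely how the boundary data of the ODE encode the lifted angles $\hat\Theta_{V^{n-1}}$ and $\hat\theta$. Under Calabi symmetry, the dHYM equation becomes a first-order ODE on a closed interval $[0,a]$ (or similar), where the values of the unknown at the two endpoints are determined by the classes $[\omega]$ and $[\alpha]$, and the two relevant divisors are precisely the exceptional divisor $E$ and the proper transform $H$ of a hyperplane. I would first record the explicit formulas for $Z_{[\alpha][\omega]}(E)$, $Z_{[\alpha][\omega]}(H)$, and $Z_{[\alpha][\omega]}(X)$ in terms of the parameters defining $[\omega]$ and $[\alpha]$ (these are polynomials in the Calabi parameters), and express $\hat\Theta_E$, $\hat\Theta_H$, and $\hat\theta$ as the arguments of the corresponding integrals $\int(\omega+i\alpha)^k$. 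Since $E\cong\mathbb P^{n-1}$ and $H$ are the only divisors up to the relevant equivalence, condition (2) really is a condition on just these two numbers.

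For the forward direction (solution $\Rightarrow$ (1) and (2)), I would invoke the necessary condition from \cite{CJY}: any class solving \eqref{dHYM1} has a well-defined global angle $\hat\theta$, and the subvariety inequality $\hat\Theta_{V^k}>\hat\theta-(n-k)\frac\pi2$ holds; combined with the analogous upper bound coming from the conjugate/complementary consideration (or directly from the structure of the solution, which forces the argument of $(\omega+i\alpha)^n$ to be constant and the partial integrals to wind by a bounded amount), this gives \eqref{liftedstab} for $V^{n-1}$. The existence of a \emph{unique} lift of $\hat\theta$ to $\mathbb{R}$ is automatic once a solution exists, since the phase function $\Theta(x)$ along the ODE is continuous and its endpoint values pin down the lift; here one must check that the two-sided angle constraint genuinely rules out the ambiguous case, which is where \eqref{liftedstab} with $k=n-1$ does more work than \eqref{stabconj2}.

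For the converse, (1) lets us fix the lift of $\hat\theta$, and (2) gives two strict inequalities on $\hat\Theta_{n-1}$. The strategy is the same as in Theorem \ref{maintheorem}: solve the ODE from one endpoint using existence/uniqueness, and show that conditions (1)--(2) force the solution's far endpoint to land in the admissible region, i.e.\ to match the boundary value dictated by the other class. The key point is a monotonicity/winding estimate: as the ODE is integrated across $[0,a]$, the total change in the phase is controlled by the endpoint angles $\hat\Theta_E$ and $\hat\Theta_H$ (equivalently by $\hat\Theta_{n-1}$ at the two divisors), and \eqref{liftedstab} is exactly the condition that this winding is compatible with closing up the trajectory. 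I would split into cases according to whether $\hat\Theta_{n-1}$ lies in the supercritical subrange or not, reusing the combinatorial solvability argument of Theorem \ref{maintheorem} in the former and handling the latter by a direct analysis of the ODE's boundary behavior.

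The main obstacle I anticipate is the converse direction in the non-supercritical regime: there the combinatorial machinery behind Theorem \ref{maintheorem} guarantees a solution only under \eqref{stabconj2}, and \eqref{liftedstab} for $k=n-1$ is strictly stronger — so one must show it is \emph{exactly} strong enough, neither requiring the lower-dimensional conditions (there are none below divisors that are not already captured) nor falling short. Concretely, the hard part is proving that the single two-sided inequality on the divisor angle, together with uniqueness of the lift, propagates through the ODE to control the behavior at \emph{both} endpoints simultaneously, rather than just one; this likely requires a careful sign analysis of the derivative of the phase function at the endpoints, showing that a would-be obstruction to closing the trajectory would violate \eqref{liftedstab}.
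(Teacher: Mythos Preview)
Your forward direction (solution $\Rightarrow$ (1) and (2)) is essentially what the paper does: a solution pins down the lift of $\hat\theta$ because the pointwise angle is a sum of arctangents, and restricting to $E$ or $H$ drops exactly one arctangent, which immediately gives the two-sided bound \eqref{liftedstab}; the paper simply cites Lemma~8.2 of \cite{CJY} for this.

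The converse is where your proposal diverges from the paper and becomes vague in a way that hides the actual mechanism. You frame it as a shooting problem (solve from one endpoint, hope to hit the other) and propose a ``winding estimate'' plus ``sign analysis of the derivative of the phase function at the endpoints,'' with a case split into supercritical and non-supercritical phase. None of this is what the paper does, and your description does not make clear how you would close the argument. The paper instead works entirely through the level-set picture already set up for Theorem~\ref{maintheorem}: both $z_1=1+iq$ and $z_2=a+ip$ lie on the same level set of $\Phi$ by Lemma~\ref{levelset}, so the only questions are whether they lie on the same \emph{component} and whether the connecting arc is graphical. The key observation you are missing is that \eqref{liftedstab} for $k=n-1$, via $\hat\Theta_{V^{n-1}}=(n-1)\arg(z_\ell)$, forces both $\arg(z_1)$ and $\arg(z_2)$ into the same interval of width $\tfrac{\pi}{n-1}$. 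Since the components of the level set live in alternating sectors of width $\tfrac{\pi}{n}$, this angular bound leaves only two possibilities: same component, or adjacent components separated by one empty sector. The latter is ruled out because \eqref{liftedstab} also implies ${\rm Im}(ie^{-i\hat\theta}z_\ell^{n-1})>0$, i.e.\ both points lie in $\mathcal S_{n-1}$; adjacency would then (via Proposition~\ref{comb}) force two rays of $\mathcal R_{n-1}$ between them, contradicting the $\tfrac{\pi}{n-1}$ bound. Graphicality follows exactly as in Theorem~\ref{maintheorem} from the $k=n-1$ stability. No case split on phase is needed, and the full induction of Proposition~\ref{mainprop} down to $k=1$ is not used---only the single level $k=n-1$ enters.
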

Here we see  the importance of  finding a lift of $\hat\theta$. In general, finding a purely algebraic method for lifting $\hat\theta$, which only depends on the classes $[\omega]$ and $[\alpha]$,  would greatly aid our understanding of the relationship between  solvability of \eqref{dHYM1} and stability. In this light, one could view condition \eqref{stabconj2} as algebraic condition which specifies a lift of $\hat\theta$, which then leads to a solution of the equation.  Therefore, it would be interesting to develop more such methods of lifting $\hat\theta$ in general.

The paper is organized as follows. In Section \ref{2} we reformulate equation \eqref{dHYM1}  and introduce  the Calabi Symmetry ansatz,  and show how solutions to \eqref{dHYM1} correspond to solutions of an exact ODE. In Section \ref{3} we explicitly compute the inequalities arising from the stability condition \eqref{stabconj2} for all subvarieties   of $X$. We then show how these  inequalities define regions in $\mathbb R^2$ where the graph of our ODE is given, and prove a key proposition relating the slopes of the boundaries of these regions. This proposition is used in Section \ref{4} to limit the initial configurations of boundary values for our ODE, which we use  to prove Theorem \ref{maintheorem}. We conclude the paper in Section \ref{5} with a discussion on how   $\hat\theta$ can be lifted from $S^1$ to $\mathbb R$ without appealing to existence of a solution, assuming \eqref{stabconj2} is satisfied for all subvarieties. We then prove Theorem \ref{theorem2}.
\medskip

{\bf Acknowledgements.} We would like to thank Tristan C. Collins for many valuable  discussions and comments. This work was funded in part by a Simons collaboration grant. 

\section{Background and Calabi Symmetry}
\label{2}

Let $(X,\omega)$ be a compact K\"ahler manifold, and $[\alpha]\in H^{1,1}(X,\mathbb R)$ a real cohomology class.  We study the {\it deformed Hermitian-Yang-Mills} equation, which as stated in the introduction seeks a representative $\alpha\in[\alpha]$ satisfying
\be
{\rm Im}(e^{- i\hat\theta}(\omega+i\alpha)^n)=0\nonumber
\ee 
for a fixed constant $e^{i\hat\theta}\in S^1$. Integrating the above equation we see the angle $\hat\theta$ must be the argument of the complex number
\be
\zeta_X:=\int_X(\omega+i\alpha)^n.\nonumber
\ee
By the $\partial\bar\partial$-Lemma $\zeta_X$ is independent of a choice of representatives of the classes $[\omega]$ and $[\alpha]$. Thus we see a simple necessary class condition for existence is that  $\zeta_X\neq 0$.

We reformulate the deformed Hermitian-Yang-Mills equation  as follows. Given a representative $\alpha\in[\alpha]$, let $\lambda_1,...,\lambda_n$ denote the real eigenvalues of the Hermitian endomorphism $\omega^{-1}\alpha$. Then, at a fixed point where $\omega^{-1}\alpha$ is diagonal, we see
\be
{\rm Im}\left(e^{- i\hat\theta}\frac{(\omega+i\alpha)^n}{\omega^n}\right)={\rm Im}\left(e^{- i\hat\theta}\prod_{k=1}^n(1+i\lambda_k)\right).\nonumber
\ee 
We denote the angle of the complex number $\prod_{k=1}^n(1+i\lambda_k)$ by $\Theta_\omega(\alpha)$, which can be computed as follows:
\bea
\Theta_\omega(\alpha)&=&-i{\rm log}\frac{\prod_{k=1}^n(1+i\lambda_k)}{|\prod_{k=1}^n(1+i\lambda_k)|}\nonumber\\
&=&-i{\rm log}\frac{\prod_{k=1}^n(1+i\lambda_k)}{(\prod_{k=1}^n(1+i\lambda_k)\prod_{k=1}^n(1-i\lambda_k))^{\frac12}}\nonumber\\
&=&-\frac i2{\rm log}\frac{\prod_{k=1}^n(1+i\lambda_k)}{\prod_{k=1}^n(1-i\lambda_k)}.\nonumber
\eea
By the complex formulation of arctangent, we arrive at
\be
\Theta_\omega(\alpha)=\sum_{k=1}^n{\rm arctan}(\lambda_k).\nonumber
\ee
Thus equation \eqref{dHYM1} is equivalent to
\be
\label{dHYM2}
\Theta_\omega(\alpha)=\hat\theta\qquad{\rm mod}\,\,2\pi.
\ee
The advantage of this formulation is that the pointwise angle $\Theta_\omega(\alpha)$ is real valued  and lies in $(-n\frac \pi2, n\frac \pi 2)$, while $e^{i\hat\theta}$ is only valued in $S^1$. Thus a solution of the deformed Hermitian-Yang-Mills equation specifies a unique lift of $\hat\theta$ to $\mathbb R$.  We refer to such a lift  as a {\it branch} of the equation.

In this paper we construct solutions to the deformed Hermitian-Yang-Mills equation in a specific geometric setup, where we can take advantage of large symmetry. Specifically, let $X$ be the K\"ahler manifold defined by blowing up $\mathbb P^n$ at one point $x_0$. Let $E$ denote the  exceptional divisor, and $H$  the pullback of the hyperplane divisor from  $\mathbb P^n$. These two divisors span $H^{1,1}(X,\mathbb R)$, and any K\"ahler class will lie in $a_1[H]-a_2[E]$ with $a_1>a_2>0$. Normalizing,  assume $X$ admits a K\"ahler form $\omega$ in the class 
\be
[\omega]= a[H]-[E],\nonumber
\ee
with $a>1$. Furthermore, assume our class $[\alpha]$ satisfies
\be
[\alpha]= p[H]-q[E],\nonumber
\ee
for a choice of $p,q\in\mathbb R$. 

Calabi introduced the following ansatz in \cite{Ca}. On $X\backslash (H\cup E)\cong \mathbb C^n\backslash \{0\}$ define the radial coordinate
\be
\rho={\rm log}(|z|^2).\nonumber
\ee
Any function $u(\rho)\in C^\infty(\mathbb R)$ that satisfies $u'(\rho)>0,$ $u''(\rho)>0$, has the property that its complex Hessian $\omega=i\partial\bar\partial u$  defines a K\"ahler form on $\mathbb C^n\backslash \{0\}$. In order for $\omega$ to extend to a K\"ahler form on $X$ in the class $a[H]-[E]$, we need $u$ to satisfy the following boundary asymptotics. Define the functions $U_0, U_\infty:[0,\infty)\rightarrow \mathbb R$ via 
\be
U_0(r):= u({\rm log}r)-{\rm log}r\qquad {\rm and}\qquad U_\infty(r):= u(-{\rm log}r)+a{\rm log}r.\nonumber
\ee
Then we need both $U_0$ and $U_\infty$ to extend by continuity to a smooth function at $r=0$, with both $U_0'(0)>0$ and  $U_\infty'(0)>0$. In particular this fixes the following asymptotic behavior of $u$:
\be
\lim_{\rho\ra-\infty}u'(\rho)=1,\qquad\lim_{\rho\ra\infty}u'(\rho)=a.\nonumber
\ee
This ensures that $\omega=i\partial\bar\partial u$ extends to a K\"ahler form on $X$ and lies in the correct class.

 Similarly, for any function $v(\rho)\in C^\infty(\mathbb R)$, the Hessian $i\partial\bar\partial v(\rho)$ defines a $(1,1)$ form $\alpha$ on $\mathbb C^n\backslash \{0\}$. In order for $\alpha$ to extend to $X$ in the class $[\alpha]$,  we require asymptotics of the same form, without any positivity assumptions since $[\alpha]$ need not be a K\"ahler class. As above, we define the functions    $V_0, V_\infty:[0,\infty)\rightarrow \mathbb R$ via 
\be
V_0(r):= v({\rm log}r)-q{\rm log}r\qquad {\rm and}\qquad V_\infty(r):= v(-{\rm log}r)+p{\rm log}r,\nonumber
\ee
and  specify that $V_0$ and $V_\infty$  extend by continuity to a smooth function at $r=0$. As a result $v(\rho)$ satisfies:   
\be
\label{vinfinity}
\lim_{\rho\ra-\infty}v'(\rho)=q,\qquad\lim_{\rho\ra\infty}v'(\rho)=p.
\ee
Then $i\partial\bar\partial v $ extends to a smooth (1,1) form on $X$ in the class $[\alpha]$.

Given this setup, the deformed Hermitian-Yang-Mills equation reduces to an ODE. In particular, for a given function $u(\rho)$ satisfying the Calabi ansatz above (which defines our background K\"ahler form), we need to find a function $v(\rho)$ of a single real variable $\rho$. Working on the coordinate patch $X\backslash (H\cup E)\cong \mathbb C^n\backslash \{0\}$, we have
\be
\omega=i\partial\bar\partial u=\left(\frac{u'}{e^\rho}\delta_{jk}+(u''-u')\frac{\bar z^j z^k}{e^{2\rho}}\right)dz^j\wedge d\bar z^k,\nonumber
\ee
and
\be
\alpha=i\partial\bar\partial v=\left(\frac{v'}{e^\rho}\delta_{jk}+(v''-v')\frac{\bar z^j z^k}{e^{2\rho}}\right)dz^j\wedge d\bar z^k.\nonumber
\ee
With the above formulas, once can easily check that the eigenvalues of $\omega^{-1}\alpha$ are $\frac {v'}{u'}$ with multiplicity (n-1), and  $\frac{v''}{u''}$ with multiplicity one (for instance, see \cite{FL}).

 In fact, before we write down the deformed Hermitian-Yang-Mills equation in this setting, we can simplify our picture further. Because $u''>0$, the first derivative $u'$ is monotone increasing, allowing us to  view $u'$ as a real variable, denoted by $x$, which ranges from $1$ to $a$. We then write $v'$ as a graph $f$ over $x\in(1,a)$:
\be
f(x)=f(u'(\rho))=v'(\rho).\nonumber
\ee
Taking the derivative of both sides, we see by the chain rule
\be
f'(x)u''(\rho)=v''(\rho).\nonumber
\ee
Working in the coordinate $x$, the eigenvalues of $\omega^{-1}\al$ are 
\be
\frac{v'}{u'}=\frac{f}x\,(\text{with multiplicity}\,n-1)\qquad{\rm and}\qquad\frac{v''}{u''}=f'.\nonumber
\ee
Note that as $x\rightarrow 1$, then $\rho\rightarrow -\infty$,  while $x\rightarrow a$ implies $\rho\rightarrow \infty$. Thus the asymptotics of $v(\rho)$ given by \eqref{vinfinity} are equivalent to
\be
\lim_{x\rightarrow 1^+}f(x)=q,\qquad\lim_{x\rightarrow a^-}f(a)=p,\nonumber
\ee
and we extend $f(x)$   to the boundary $[1,a]$ by continuity.

We now reformulate our problem into this setup. Using the explicit formulas for the eigenvalues of $\omega^{-1}\alpha$, need to find a real function $f:[1,a]\rightarrow \mathbb R$ with boundary values $f(1)=q,$ and $f(a)=p$, satisfying the ODE
\be
\label{ODE1}
{\rm Im}\left(e^{-i\hat\theta}(1+i\frac fx)^{n-1}(1+i f')\right)=0.
\ee
Since $x$ is always positive, multiplying by $x^{n-1}$ will not change the equation, so we rewrite the ODE as
\be
{\rm Im}\left(e^{-i\hat\theta}(x+if)^{n-1}(1+i f')\right)=0.\nonumber
\ee
Observe that this ODE is exact
\bea
{\rm Im}\left(e^{-i\hat\theta}(x+if)^{n-1}(1+i f')\right)&=&{\rm Im}\left(e^{-i\hat\theta}\frac d{dx}\frac{(x+if)^{n}}n\right)\nonumber\\
&=&\frac d{dx}{\rm Im}\left(e^{-i\hat\theta}\frac{(x+if)^{n}}n\right)=0.\nonumber
\eea
Thus we are looking for a function $f(x)$ so that the graph $(x, f(x))$ lies on a level curve of 
\be
\label{DHYMexact}
\Phi(x,y):={\rm Im}\left(e^{-i\hat\theta}(x+iy)^n\right).
\ee
Figure \ref{fig:LevelSets} below shows a level set $\Phi(x,y)=c$ for some $c\neq 0$, in the case that $n=11$. The $n$ dotted lines represent the level set $\Phi(x,y)=0$. Thus we see $\Phi(x,y)=c$ consists of $n$ disjoint curves lying in alternating sectors, asymptotic to the lines given by $\Phi(x,y)=0$. Solutions to the deformed Hermitian-Yang-Mills equations are graphical portions of the level set that lie over $[1,a]$. Solutions of the equation for different branches can be found by rotating by $2\pi/n$.
\begin{figure}[h!]
  \includegraphics[width=3in]{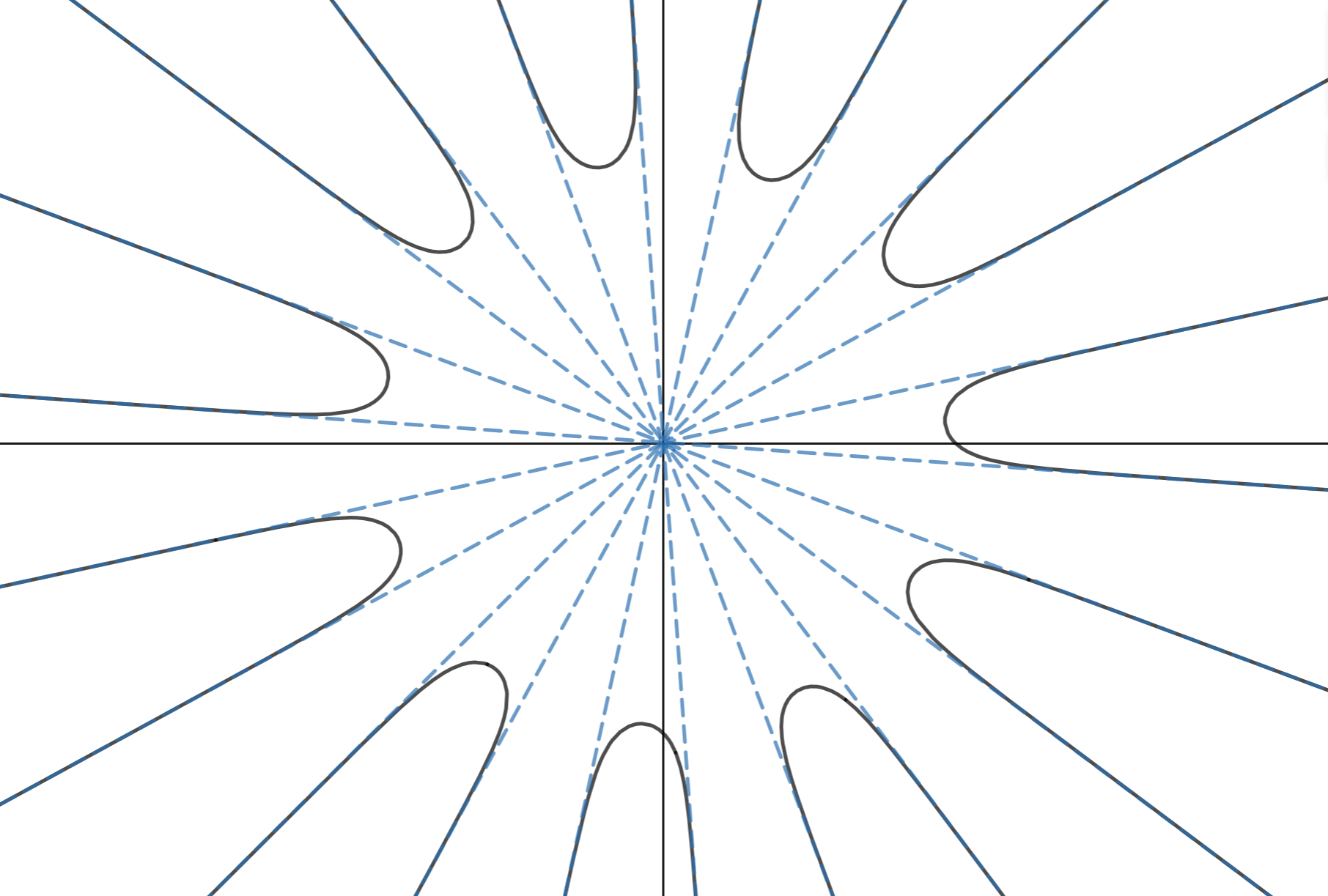}
  \caption{Graph of a level set $\Phi(x,y)=c$, in the case $n=11$.}
  \label{fig:LevelSets}
\end{figure}

\section{Stability}
\label{3}

We now turn to the stability condition that guarantees existence of a solution of \eqref{dHYM1}. This provides a coherent algebraic framework that is simple to interpret from initial conditions, without any assumptions on  explicit representatives of $[\omega]$ or $[\alpha]$. In this paper, we use ``central charge'' notation to highlight possible connections with Bridgeland stability conditions. We refer the reader to \cite{CY, CXY} for a more detailed discussion of stability and algebraic obstructions to solutions of the deformed Hermitian-Yang-Mills equations in general, and only focus in this paper on our specific geometric setup.

As stated in the introduction, for an analytic subvariety $V\subset X$, we define the following complex number:
\be
Z_{[\alpha][\omega]}(V):=-\int_V e^{-i\omega+\alpha},\nonumber
\ee
where by convention we only integrate the term in the expansion of order ${\rm dim}(V)$.

\begin{definition}
The pair $[\omega],[\alpha]$ is stable if, for each $k\in\{1,...,n-1\}$   all analytic subvarieties $V^k\subset X$ of dimension $k$ satisfy either
for all analytic subvarieties $V\subset X$,
\be
\label{stability}
{\rm Im}\left(\frac{Z_{[\alpha][\omega]}(V^k)}{Z_{[\alpha][\omega]}(X)}\right)>0 \qquad{\rm or}\qquad{\rm Im}\left(\frac{Z_{[\alpha][\omega]}(V^k)}{Z_{[\alpha][\omega]}(X)}\right)<0.
\ee
\end{definition}
This definition only makes sense if $Z_{[\alpha][\omega]}(X)\neq 0$, which is equivalent to our assumption that $\zeta_X\neq 0$. Now, because of our specific geometric setup, the inequality \eqref{stability} can be explicitly computed in terms of $a,p$, and $q$, for each analytic subvariety of $X$. 

Recall that $H$ is the pullback of the hyperplane divisor, and $E$ is the exceptional divisor, and that these divisors do no intersect.  We begin by computing $\zeta_X$ explicitly:
\bea
\zeta_X:=\int_X(\omega+i\alpha)^n&=&(a[H]-[E]+i(p[H]-q[E]))^n\nonumber\\
&=&(a+ip)^n[H]^n+(1+iq)^n(-1)^n[E]^n\nonumber\\
&=&(a+ip)^n-(1+iq)^n,\nonumber
\eea
where the last line follows since $[E]^n=(-1)^{n-1}$. Again by assumption $\zeta_X\neq 0$, which is the same as requiring    $a,p,$ and $q$ do not simultaneously satisfy 
\be
\label{zero}
|a+ip|=|1+iq|\qquad{\rm and}\qquad|{\rm arg}(a+ip)-{\rm arg}(1+iq)|=\frac {2\pi m}n
\ee
for some $m\in \mathbb Z$.  We remark that this does not provide a major constraint on which classes we consider. Given a choice of $q$, there are only a finite number of points $a+ip$ that satisfy $(a+ip)^n=(1+iq)^n$.

We now check stability for $H^{n-k}$ and $(-1)^{n-k-1}E^{n-k}$ for $k\in\{1,...,n-1\}$, where $k$ represents the dimension of each subvariety.  Here we multiply $E^{n-k}$ by $(-1)^{n-k-1}$ so that when this variety is viewed as a divisor of $(-1)^{n-k}E^{n-(k+1)}$ it is effective. We compute
\bea
Z_{[\alpha][\omega]}(H^{n-k})&=&-\int_{H^{n-k}}(-i)^k(\omega+i\alpha)^k\nonumber\\
&=&-\int_{H^{n-k}}i^{-k}(a[H]-[E]+i(p[H]-q[E]))^k\nonumber\\
&=&-i^{-k}(a+ip)^k[H]^k[H]^{n-k}\nonumber\\
&=&- i^{-k}(a+ip)^k.\nonumber
\eea
Next we see
\bea
Z_{[\alpha][\omega]}((-1)^{n-k-1}E^{n-k})&=&-\int_{(-1)^{n-k-1}E^{n-k}}(-i)^k(\omega+i\alpha)^k\nonumber\\
&=&-\int_{(-1)^{n-k-1}E^{n-k}}i^{-k}(a[H]-[E]+i(p[H]-q[E]))^k\nonumber\\
&=&-i^{-k}(-1)^k(1+iq)^k[E]^k(-1)^{n-k-1}[E]^{n-k}\nonumber\\
&=&- i^{-k}(-1)^{n-1}(1+iq)^k[E]^n\nonumber\\
&=&- i^{-k} (1+iq)^k,\nonumber
\eea
since as above $[E]^n=(-1)^{n-1}$. We also can compute the charge of our manifold $X$, and note
\be
Z_{[\alpha][\omega]}(X)=-\int_X(-i)^n(\omega+i\alpha)^n=-(i)^{-n}\zeta_X=-(i)^{-n}r_{X}e^{i\hat\theta},\nonumber
\ee
for some fixed real number $r_X$. Since $r_X>0,$ we can multiply \eqref{stability} by $r_X$ without changing the sign of the inequality, and so we note 
\be
r_X{\rm Im}\left(\frac{Z_{[\alpha][\omega]}(V^k)}{Z_{[\alpha][\omega]}(X)}\right)={\rm Im}\left(\frac{r_XZ_{[\alpha][\omega]}(V^k)}{-i^{-n}r_{X}e^{i\hat\theta}}\right)= {\rm Im}\left( -i^ne^{-i\hat\theta}{Z_{[\alpha][\omega]}(V^k)} \right).\nonumber
\ee
Thus, plugging in our formulas for $H^{n-k}$ and $(-1)^{n-k-1}E^{n-k}$ gives either
\be
{\rm Im}\left( i^{n-k}e^{-i\hat\theta}(a+ip)^k \right)>0,\nonumber
\ee
and
\be
{\rm Im}\left( i^{n-k}e^{-i\hat\theta}(1+iq)^k \right)>0,\nonumber
\ee
or the above with the inequality flipped. Summing up we have:
\begin{lemma}
\label{stabilitylem}
Given a choice of classes $[\omega]=a[H]-[E]$ and $[\alpha]=p[H]-q[E]$ on $X$, denote complex numbers $z_1=(1+iq)$ and $z_2=(a+ip)$. Then the pair $[\omega],[\alpha]$ is stable if and only if, for all $k\in\{1,...,n-1\}$, 
\be
\label{stability2}
{\rm Im}\left( i^{n-k}e^{-i\hat\theta}(z_\ell)^k \right)>0\qquad{\rm or}\qquad {\rm Im}\left( i^{n-k}e^{-i\hat\theta}(z_\ell)^k \right)<0
\ee
for  $\ell\in\{1,2\}$.
\end{lemma}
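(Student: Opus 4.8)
The plan is to deduce Lemma~\ref{stabilitylem} from the explicit formulas established just above, the only real content being that checking the stability inequality on \emph{all} analytic subvarieties of a fixed dimension $k$ amounts to checking it on the two distinguished classes $[H]^{n-k}$ and $(-1)^{n-k-1}[E]^{n-k}$.

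First I would note that $Z_{[\alpha][\omega]}(V)=-\int_V(-i)^k(\omega+i\alpha)^k$ depends only on the homology class $[V]\in H_{2k}(X,\mathbb R)$ and is $\mathbb R$-linear in it, being the pairing of $[V]$ against a fixed closed form. On $X$ the group $H_{2k}(X,\mathbb R)$ is two-dimensional, spanned by $[H]^{n-k}$ — represented by the strict transform of a generic linear $\mathbb P^k\subset\mathbb P^n$ avoiding $x_0$ — and by $(-1)^{n-k-1}[E]^{n-k}$ — represented by a linear $\mathbb P^k$ inside $E\cong\mathbb P^{n-1}$, the sign chosen so this representative is effective. Since both generators are honest irreducible subvarieties of dimension $k$, the forward implication is immediate: stability forces \eqref{stability} for each of them, and substituting $Z_{[\alpha][\omega]}(H^{n-k})=-i^{-k}z_2^k$ and $Z_{[\alpha][\omega]}((-1)^{n-k-1}E^{n-k})=-i^{-k}z_1^k$ into the rescaling identity ${\rm Im}(Z_{[\alpha][\omega]}(V^k)/Z_{[\alpha][\omega]}(X))=r_X^{-1}\,{\rm Im}(-i^ne^{-i\hat\theta}Z_{[\alpha][\omega]}(V^k))$ and using $r_X>0$ turns \eqref{stability} into exactly the two inequalities of \eqref{stability2} for that $k$ ($\ell=2$ from $H$, $\ell=1$ from $E$), with a common sign since all dimension-$k$ subvarieties must share one.

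For the converse, given \eqref{stability2}, I would take an arbitrary irreducible $V^k$, write $[V^k]=c_1[H]^{n-k}+c_2(-1)^{n-k-1}[E]^{n-k}$, and use linearity of $Z_{[\alpha][\omega]}$ with the same rescaling identity to express ${\rm Im}(Z_{[\alpha][\omega]}(V^k)/Z_{[\alpha][\omega]}(X))$ as $c_1$ times the $\ell=2$ quantity of \eqref{stability2} plus $c_2$ times the $\ell=1$ quantity; provided $c_1,c_2\ge 0$ and are not both zero this inherits their common sign, giving \eqref{stability} for $V^k$. The step I expect to be the main obstacle is precisely this input about the effective cone of $X$: one must show that an arbitrary effective $k$-cycle has nonnegative coordinates in the above basis, equivalently that the two distinguished classes control the pseudoeffective cone of $k$-cycles. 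I would handle this by splitting into the cases $V^k\subseteq E$ — where $[V^k]$ is a positive multiple of $(-1)^{n-k-1}[E]^{n-k}$ — and $V^k$ mapping finitely onto a $k$-dimensional $\bar V\subset\mathbb P^n$, where $[V^k]$ is recovered from $\deg\bar V$ and the behavior of $\bar V$ along $E$, and then verify that the sign conclusion survives; if some extremal effective class (for instance the strict transform of a linear subspace through $x_0$) were to lie outside the cone spanned by the two generators, one would additionally have to check that \eqref{stability2} already forces its stability inequality. Everything else — the powers of $i$ and the bookkeeping of the two allowed signs for each $k$ — is routine and is carried out in the computations preceding the lemma.
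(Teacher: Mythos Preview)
The paper's own argument is exactly the computation in the paragraphs preceding the lemma: it evaluates $Z_{[\alpha][\omega]}(H^{n-k})$, $Z_{[\alpha][\omega]}\bigl((-1)^{n-k-1}E^{n-k}\bigr)$ and $Z_{[\alpha][\omega]}(X)$, observes that the stability inequality for these two distinguished families is literally \eqref{stability2}, and then writes ``Summing up we have'' before stating the lemma. In other words, the paper treats the lemma as a reformulation of the stability condition \emph{specialized to those two subvarieties}; only the implication ``stable $\Rightarrow$ \eqref{stability2}'' is ever used downstream (Proposition~\ref{mainprop}, Theorem~\ref{maintheorem}, and the proof of Theorem~\ref{theorem2}). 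Your forward implication matches this exactly.

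Where you diverge is in taking the ``if'' direction seriously and trying to show that \eqref{stability2} for $z_1,z_2$ forces \eqref{stability} for \emph{every} $V^k$. You correctly isolate the effective-cone input as the crux, and you even anticipate the problematic class. But this is a genuine obstruction, not a technicality: the strict transform of a degree-$d$ subvariety of $\mathbb P^n$ with multiplicity $m>0$ at $x_0$ has class $d\,[H]^{n-k}-m\,(-1)^{n-k-1}[E]^{n-k}$, so $c_2=-m<0$, and there is no reason for $d\cdot A-m\cdot B$ to inherit a definite sign from $A,B$ having the same sign. Your linearity argument therefore cannot close, and the additional check you propose (that \eqref{stability2} still forces the inequality for such classes) is not generally valid. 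The paper does not attempt this direction at all; the ``if and only if'' in the statement should be read as shorthand for the equivalence between \eqref{stability} on the two generating families and \eqref{stability2}, which is all the subsequent arguments require.
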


We now turn to some preliminary results about the structure of the inequalities defined in \eqref{stability2}. Let $z$ be the standard coordinate on $\mathbb C$, and choose a branch cut along the negative $x$-axis, so that $- \pi \leq {\rm arg} (z)< \pi$. For each $k\in\{1,...,n\}$, consider the  set defined by
\be
{\mathcal R}_k:=\{z\in\mathbb C|\,{\rm Im}\left( i^{n-k}e^{-i\hat\theta}z^k \right)=0\,\,{\rm and}\,\,- \frac{\pi}2 \leq {\rm arg} (z)<\frac{\pi}2\},\nonumber
\ee
which consists of $k$-rays emanating from the origin. Even though the stability conditions above are only defined for $k\leq n-1$, it is useful for our proof to also consider the rays determined by the $k=n$ case. Now, denote these rays via $\{r_k^1,r^2_k,..., r^k_k\},$ numbered so that
\be
\frac\pi2>{\rm arg}(r_k^1)>{\rm arg}(r_k^2)>\cdots> {\rm arg}(r_k^k)\geq -\frac{\pi}2.\nonumber
\ee
By definition of the map $z\mapsto z^k$, we see that these rays are all $\frac\pi k$ rotations of each other, i.e. ${\rm arg}(r_k^{j+1})-{\rm arg}(r_k^{j})=\frac\pi k$. Next, we define a {\it sector} to be the space between (but not including) two adjacent rays. Again, by the behavior of $z\mapsto z^k$, we see that the space
\be
{\mathcal S_k}:=\{z\in\mathbb C\,|\,{\rm Im}\left( i^{n-k}e^{-i\hat\theta}z^k \right)>0\,\,{\rm and}\,\,- \frac{\pi}2 \leq {\rm arg} (z)<\frac{\pi}2\}\nonumber
\ee
consists of alternating sectors, i.e. each ray bounds one and only one sector in ${\mathcal S_k}$. See Figure $\ref{fig:Sk}$ below.

 \begin{figure}[h!]
  \includegraphics[width=1.5in]{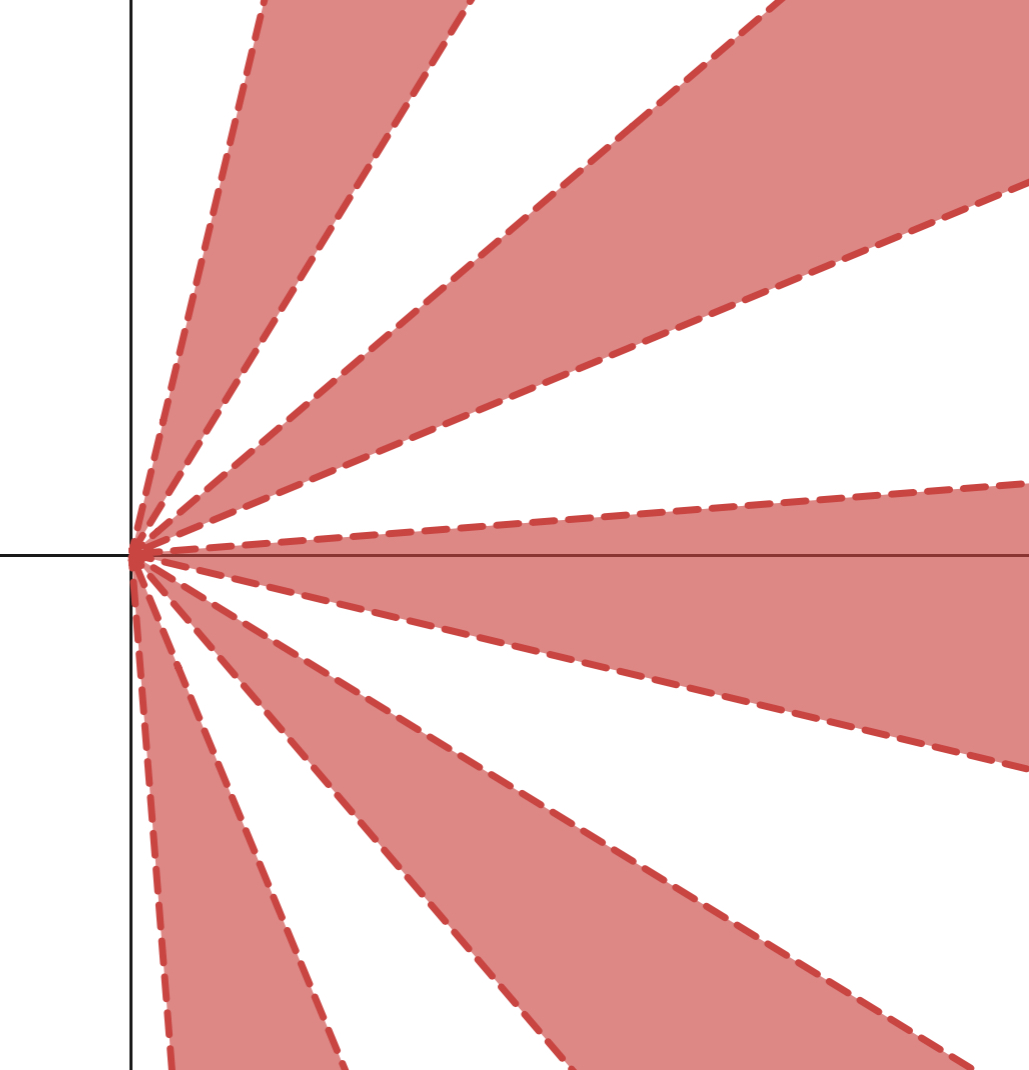}
  \caption{The set ${\mathcal S}_k$, in the case $k=10$.}
  \label{fig:Sk}
\end{figure}
Furthermore, consider the set
\be
{\mathcal S_k^-}:=\{z\in\mathbb C\,|\,{\rm Im}\left( i^{n-k}e^{-i\hat\theta}z^k \right)<0\,\,{\rm and}\,\,- \frac{\pi}2 \leq {\rm arg} (z)<\frac{\pi}2\}.\nonumber
\ee
Now, if we write a ray $r^j_k$ as $\mathbb R_+e^{i\phi_k^j}$, we see the sets of rays can be identified with sets of angles, i.e. ${\mathcal R}_k\cong\{\phi^1_k,...,\phi^k_k\}$. We conclude this section with a combinatorial argument that plays a key role in the proof of Theorem \ref{maintheorem}.

\begin{proposition}
\label{comb}
For any $k\in\{2,..., n\}$, the rays in the sets ${\mathcal R}_k$ and ${\mathcal R}_{k-1}$ alternate, and ${\mathcal R}_k$ contains the rays with the largest and smallest argument. In particular: 
\be
 \frac\pi2>\phi_k^1>\phi_{k-1}^1>\phi_k^2>\phi_{k-1}^2>\cdots>\phi_{k-1}^{k-2}>\phi_k^{k-1}>\phi_{k-1}^{k-1}\geq\phi_k^k\geq -\frac{\pi}2.\nonumber
\ee
Furthermore, if the last inequality is strict, i.e. $\phi_k^k >-\frac{\pi}2$, then $\phi_{k-1}^{k-1}>\phi_k^k$ as well.
\end{proposition}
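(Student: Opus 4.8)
The plan is to derive an explicit closed form for the angles $\phi^i_k$ and then verify the whole chain by a short arithmetic computation. Writing $\phi={\rm arg}(z)$, the defining condition ${\rm Im}(i^{n-k}e^{-i\hat\theta}z^k)=0$ depends only on $\phi$ and is equivalent to
\[
g_k(\phi):=k\phi+(n-k)\tfrac\pi2-\hat\theta\in\pi\mathbb Z ,
\]
which is why $\mathcal R_k$ is a union of $k$ rays. Introduce $\mu:=\tfrac{n\pi}2-\hat\theta$ and the single quantity $\delta:=\pi\lceil\mu/\pi\rceil-\mu$, so that $\delta\in[0,\pi)$ and $\delta$ depends only on $n$ and $\hat\theta$, not on $k$. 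Solving $g_k(\phi)\in\pi\mathbb Z$ over the half-open range $\phi\in[-\tfrac\pi2,\tfrac\pi2)$ gives that the rays, listed by decreasing argument, are exactly
\[
\phi^i_k=\frac\pi2+\frac{\delta-i\pi}{k},\qquad i=1,\dots,k .
\]
In passing this recovers the count of $k$ rays, the spacing $\pi/k$, the bound $\phi^1_k<\tfrac\pi2$, and the identity $\phi^k_k=\tfrac\delta k-\tfrac\pi2$, so that $\phi^k_k\ge-\tfrac\pi2$ with equality iff $\delta=0$.

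With this formula the comparison of an $\mathcal R_k$-ray with an $\mathcal R_{k-1}$-ray is just a linear inequality in $\delta$: clearing the positive denominators $k$ and $k-1$ gives
\[
\phi^i_k>\phi^j_{k-1}\iff \pi\bigl(k(j-i)+i\bigr)>\delta ,
\]
and since $k(j-i)+i$ is an integer and $\delta\in[0,\pi)$, this holds precisely when $k(j-i)+i\ge 1$; symmetrically $\phi^j_{k-1}>\phi^i_k$ holds precisely when $k(j-i)+i\le-1$, or when $k(j-i)+i=0$ and $\delta>0$ (the two angles coinciding exactly when $k(j-i)+i=0$ and $\delta=0$). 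Taking $j=i$ yields $\phi^i_k>\phi^i_{k-1}$ for every $i\ge1$; applying the criteria to the pair $(\phi^i_{k-1},\phi^{i+1}_k)$, where the relevant integer is $-k+i+1$, yields $\phi^i_{k-1}>\phi^{i+1}_k$ strictly for $i\le k-2$ and $\phi^{k-1}_{k-1}\ge\phi^k_k$ for $i=k-1$, with equality exactly when $\delta=0$. Concatenating these for $i=1,\dots,k-1$ produces exactly the asserted chain
\[
\tfrac\pi2>\phi^1_k>\phi^1_{k-1}>\phi^2_k>\cdots>\phi^{k-1}_k>\phi^{k-1}_{k-1}\ge\phi^k_k\ge-\tfrac\pi2 ,
\]
so the two sets alternate and $\mathcal R_k$ contains the extreme rays. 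The final assertion is then immediate, since $\phi^k_k>-\tfrac\pi2$ is equivalent to $\delta>0$, which is exactly when $\phi^{k-1}_{k-1}>\phi^k_k$ is strict.

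I do not expect a substantive difficulty. The only delicate point is the bookkeeping at the endpoint $\phi=-\tfrac\pi2$ of the half-open range $[-\tfrac\pi2,\tfrac\pi2)$, which is precisely what produces the single non-strict link $\phi^{k-1}_{k-1}\ge\phi^k_k$ and the borderline value $\delta=0$ (the case in which $-\tfrac\pi2$ lies in both $\mathcal R_k$ and $\mathcal R_{k-1}$). I would therefore keep the range consistently half-open when counting integer solutions of $g_k(\phi)\in\pi\mathbb Z$ and when reading off $\delta$, and would sanity-check the case $\delta=0$ on a small example such as $n=k=2$, $\hat\theta=0$.
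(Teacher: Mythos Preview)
Your argument is correct and follows the same underlying idea as the paper's proof: solve the defining equation explicitly for the angles and then verify the chain by direct arithmetic. The main organizational difference is that you introduce the single $k$-independent parameter $\delta=\pi\lceil\mu/\pi\rceil-\mu\in[0,\pi)$ and obtain the uniform closed form $\phi^i_k=\tfrac\pi2+\tfrac{\delta-i\pi}{k}$, whereas the paper works with $\gamma^1=\tfrac\pi2-\phi^1_k$ (which a priori depends on $k$), first handles the boundary case $\phi^k_k=-\tfrac\pi2$ separately, and then argues that $(k-1)\sigma^1=k\gamma^1$ to relate the two families. Your parameter is exactly $\delta=\pi-k\gamma^1$, so the two computations are equivalent; your packaging simply absorbs the paper's case split (the special case is $\delta=0$) and the determination of the integer $m$ into the definition of $\delta$, which makes the comparison $\phi^i_k\gtrless\phi^j_{k-1}$ a one-line integrality check rather than a bounding argument. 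The only point to watch, as you note, is the half-open convention $[-\tfrac\pi2,\tfrac\pi2)$ when counting solutions; your treatment of the endpoint via $\delta=0$ matches the paper's special case.
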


 \begin{figure}[h!]
  \includegraphics[width=3in]{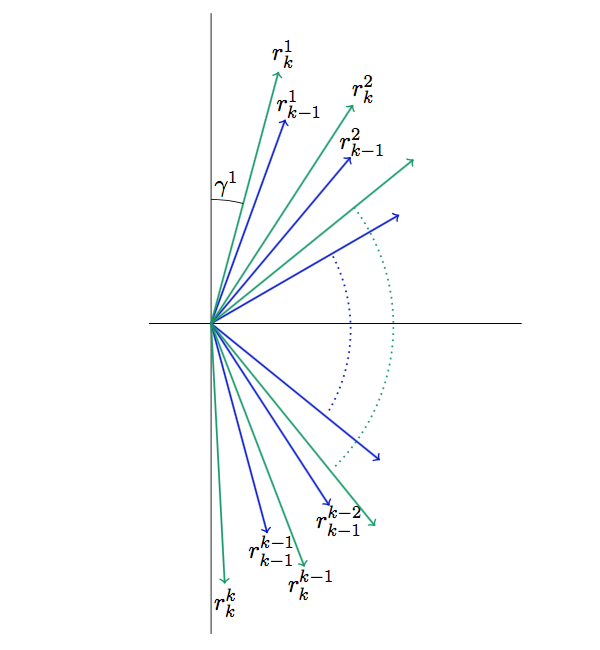}
  \caption{The alternating condition for rays in sets ${\mathcal R}_k$ and ${\mathcal R}_{k-1}$.}
  \label{fig:alt}
\end{figure}

\begin{proof}

Pick two angles $\phi^\ell_k$ and $\phi^j_{k-1}$ from ${\mathcal R}_k$ and ${\mathcal R}_{k-1}$, respectively. It will be convenient to express these angles by their distance to $\frac\pi2$, so we  set $\phi^\ell_k=\frac \pi2-\gamma^\ell$ and $\phi^j_{k-1}=\frac \pi2-\sigma^j$.

Now, since $\phi^\ell_k$ specifies a ray in the set   ${\mathcal R}_k$, by definition we  have
\be
{\rm Im}\left( e^{i\frac\pi2(n-k)}e^{-i\hat\theta}e^{ik\phi^\ell_k} \right)={\rm Im}\left( e^{i\frac\pi2(n-k)}e^{-i\hat\theta}e^{ik(\frac\pi2-\gamma^\ell)} \right)=0\nonumber
\ee
This equation holds if and only if
\be
\label{thing1}
\frac{n\pi}2-\hat\theta=k\gamma^\ell+q\pi
\ee
for some $q\in\mathbb Z$. Next, since $\phi^j_{k-1}$  lies in  ${\mathcal R}_k$ we have
\be
{\rm Im}\left( e^{i\frac\pi2(n-k+1)}e^{-i\hat\theta}e^{i(k-1)(\frac\pi2-\sigma^j)} \right)=0,\nonumber
\ee
which is equivalent to
\be
\frac{n\pi}2-\hat\theta-(k-1)\sigma^j=p\pi\nonumber
\ee
for some $p\in\mathbb Z$. Plugging in \eqref{thing1} gives that for all $\ell, j$, there exists an  $m\in\mathbb Z$ so that
\be
\label{thing2}
k\gamma^\ell -(k-1)\sigma^j=m\pi.
\ee
This is the key equation relating our angles $\phi^\ell_k$ and $\phi^j_{k-1}$.

First we prove the result in the special case that $\phi^k_k=-\frac\pi 2$. In this case   $\gamma^k=\pi$, and plugging this into \eqref{thing2} we see that $\sigma^{k-1}=\pi$ solves the equation for $m=1$. This implies $\phi_{k-1}^{k-1}=-\frac\pi2$ as well. To see the rays satisfy the alternation condition, note that all rays in ${\mathcal R}_k$ are $\frac\pi k$ rotations of each other, and furthermore both ${\mathcal R}_k$ and ${\mathcal R}_{k-1}$ contain the negative $y-$axis. As a result
\be
\phi^\ell_k=\frac\pi2-\frac{\ell\pi} k\qquad{\rm and}\qquad \phi^j_{k-1}=\frac\pi2-\frac{j\pi} {k-1},\nonumber
\ee
for $\ell\in\{1,...,k\}$ and $j\in\{1,...,k-1\}$, from which the alternating condition is clear.



We now turn to the general case, and assume that $\phi^k_k>-\frac\pi2$. As above write $\phi^1_k=\frac \pi2-\gamma^1$  and $\phi^1_{k-1}=\frac \pi2-\sigma^1$.  Since the rays in ${\mathcal R}_k$ are $\frac\pi k$ rotations of each other, and $\phi_k^1$ is the first ray to the right of the positive $y-$axis, we know $0<\gamma^1<\frac\pi k$ (since $\gamma^1=\frac\pi k$ corresponds to the special case  $\phi^k_k=-\frac\pi 2$). Similarly we know $0<\sigma^1<\frac\pi {k-1}$. Returning to \eqref{thing2}, and using that $k\gamma^1<\pi$, we know that for some $m\in \mathbb Z$
\be
\sigma^1=\frac{k\gamma^1-m\pi}{k-1}<\frac{\pi(1-m)}{k-1}.\nonumber
\ee
Since $\sigma^1>0$ we must have $m\leq 0$. Furthermore, using that $k\gamma^1>0$ gives
\be
\sigma^1=\frac{k\gamma^1-m\pi}{k-1}>\frac{-m\pi}{k-1}.\nonumber
\ee
Yet because we know $\sigma^1<\frac{\pi}{k-1}$, $m$ can not be strictly negative. Thus $m=0,$ giving
\be
\label{sigmaform}
\sigma^1=\frac{k\gamma^1}{k-1}.
\ee


Now that we have an equation specifying $\sigma^1$, we can write down the following general forms for our angles $\phi^\ell_k$ and $\phi^j_{k-1}$. Specifically, 
\be
\phi^\ell_k=\frac\pi2-\gamma^1-(\ell-1)\frac\pi k\qquad{\rm and}\qquad\phi^j_{k-1}=\frac\pi 2-\frac{k\gamma^1}{k-1}-(j-1)\frac\pi{k-1}.\nonumber
\ee
This is equivalent to
\be
\gamma^\ell=\gamma^1+(\ell-1)\frac\pi k\qquad{\rm and}\qquad\sigma^j = \frac{k\gamma^1}{k-1}+(j-1)\frac\pi{k-1}.\nonumber
\ee
For all $\ell, j$ this gives an explicit solution to \eqref{thing2}, with $m=\ell-j$. 

To complete the proof, we demonstrate the alternating condition, which states for $j\in\{1,...,k-1\}$,
\be
\phi^j_{k}>\phi^{j}_{k-1}>\phi^{j+1}_k.\nonumber
\ee
Using our explicit angle formulas this can be written as 
\be
-\gamma^1-(j-1)\frac\pi k> -\frac{k\gamma^1}{k-1}-(j-1)\frac\pi{k-1}> -\gamma^1-j\frac\pi k,\nonumber
\ee
which is equivalent to
\be
 (j-1)\frac\pi{k-1}-(j-1)\frac\pi k>\gamma^1 -\frac{k\gamma^1}{k-1} > (j-1)\frac\pi{k-1}-j\frac\pi k.\nonumber
\ee
Multiplying through by $k-1$ gives
\be
 (j-1)\pi-(j-1)\pi\frac{k-1} k>-\gamma^1  > (j-1)\pi-j\pi\frac{k-1} k.\nonumber
\ee
Simplifying, and multiplying by $-1$, we arrive at
\be
-\frac{(j-1)\pi}{k}<\gamma^1<\pi(\frac {k-j}k),\nonumber
\ee
which certainly holds for all $j\in\{1,...,k-1\}$, assuming that $0<\gamma^1<\frac\pi k$. This completes the proof of the proposition. 
\end{proof}

\section{Proof of  Theorem \ref{maintheorem}}
\label{4}

In this section we prove  our main result,  and construct a solution to the deformed Hermitian-Yang-Mills equation assuming stability of the pair $[\omega],[\alpha]$. 

Recall that on $X$ equation \eqref{dHYM1} on be reformulated using Calabi symmetry. Specifically we are looking for a real function $f:[1,a]\rightarrow \mathbb R$ with boundary values $f(1)=q,$ and $f(a)=p$, satisfying
\be
{\rm Im}\left(e^{-i\hat\theta}(1+i\frac fx)^{n-1}(1+i f')\right)=0.\nonumber
\ee
We saw above that this ODE is exact, and can be integrated to give level curves defined by \eqref{DHYMexact}. Thus we need a function $f$ that satisfies the boundary condition and lies on one of these level curves. For this to be possible, we need the specified boundary points $(1,q)$ and  $(a,p)$ to lie on the same level set.

\begin{lemma}
\label{levelset}
For any choice of  $[\omega]$ and $[\alpha]$, the fixed boundary points $(1,q)$ and  $(a,p)$  lie on the same level set of
\be
\Phi(x,y):={\rm Im}\left(e^{-i\hat\theta}(x+iy)^n\right)\nonumber
\ee
\end{lemma}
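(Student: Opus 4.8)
The statement asserts that the two boundary points $(1,q)$ and $(a,p)$ lie on the same level set of $\Phi(x,y) = {\rm Im}(e^{-i\hat\theta}(x+iy)^n)$. The plan is to compute $\Phi$ at each of the two points directly and check that the values agree. Recall $z_1 = 1 + iq$ and $z_2 = a + ip$ in the notation of Lemma \ref{stabilitylem}, so that $\Phi(1,q) = {\rm Im}(e^{-i\hat\theta} z_1^n)$ and $\Phi(a,p) = {\rm Im}(e^{-i\hat\theta} z_2^n)$.

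The key observation is that $\hat\theta$ is by definition the argument of $\zeta_X = \int_X (\omega + i\alpha)^n$, and in our geometric setup this was computed explicitly as $\zeta_X = (a+ip)^n - (1+iq)^n = z_2^n - z_1^n$. Therefore $e^{-i\hat\theta}\zeta_X = e^{-i\hat\theta}(z_2^n - z_1^n) = r_X$ is a positive real number, which means ${\rm Im}(e^{-i\hat\theta}(z_2^n - z_1^n)) = 0$. By linearity of the imaginary part this is precisely
\be
{\rm Im}(e^{-i\hat\theta} z_2^n) = {\rm Im}(e^{-i\hat\theta} z_1^n),\nonumber
\ee
i.e. $\Phi(a,p) = \Phi(1,q)$, which is the claim.

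The only point requiring a word of care is that we are implicitly using $\zeta_X \neq 0$, so that $\hat\theta = {\rm arg}(\zeta_X)$ is well-defined; but this is exactly the standing assumption of the theorems (equivalently $Z(X) \neq 0$), so there is nothing to prove there. I expect no real obstacle: the entire content is the algebraic identity $\zeta_X = z_2^n - z_1^n$ already established in Section \ref{3}, combined with the definition of $\hat\theta$ as its argument. The lemma is essentially a restatement of the fact that the deformed Hermitian-Yang-Mills equation, being exact, integrates to a statement about level sets of $\Phi$, and that the global topological constraint (the value of $\zeta_X$) is precisely what forces the two endpoints onto a common level curve.
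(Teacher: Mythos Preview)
Your proof is correct and in fact cleaner than the paper's own argument. Both rest on the same underlying fact---that $\hat\theta$ is the argument of $\zeta_X=z_2^n-z_1^n$---but you exploit it in one line: since $e^{-i\hat\theta}\zeta_X=r_X\in\mathbb R$, its imaginary part vanishes, and linearity gives $\Phi(a,p)=\Phi(1,q)$ immediately. The paper instead writes $e^{-i\hat\theta}=\big((a-ip)^n-(1-iq)^n\big)/r_X$ via complex conjugation, multiplies out $\Phi(a,p)$ and $\Phi(1,q)$ separately, strips off the real terms $(a^2+p^2)^n/r_X$ and $(1+q^2)^n/r_X$, and then observes that the two remaining expressions are complex conjugates of one another, so their imaginary parts agree up to sign. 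That route works but is more circuitous; your direct use of linearity avoids the conjugation step and the separate computations entirely.
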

\begin{proof}
Recall  the complex number $\zeta_X=\int_X(\omega+i\alpha)^n$, which in our case is computed to be $(a+ip)^n-(1+iq)^n$. Set $\zeta_X=r_Xe^{i\hat\theta}$. Taking the complex conjugate gives $r_Xe^{-i\hat\theta}=(a-ip)^n-(1-iq)^n.$ Rearranging terms we see
\be
e^{-i\hat\theta}=\frac{(a-ip)^n-(1-iq)^n}{r_X}.\nonumber
\ee
We then have
\bea
\Phi(a,p)&=&{\rm Im}\left(\frac{(a-ip)^n-(1-iq)^n}{r_X}(a+ip)^n\right)\nonumber\\
&=&{\rm Im}\left(\frac{(a^2+p^2)^n}{r_X}-\frac{(a+ip)^n(1-iq)^n}{r_X}\right).\nonumber
\eea
The first term inside of the imaginary part above is real, so
\be
\Phi(a,p)=-{\rm Im}\left(\frac{(a+ip)^n(1-iq)^n}{r_X}\right).\nonumber
\ee
In exactly the same fashion we see
\be
\Phi(1,q)={\rm Im}\left(\frac{(a-ip)^n(1+iq)^n}{r_X}\right).\nonumber
\ee
Since ${\rm Im}(z)=-{\rm Im}(\bar z)$ it follows that $\Phi(a,p)=\Phi(1,q)$, which completes the proof of the lemma.
\end{proof}

Thus $(1,q)$ and  $(a,p)$ always lie on the same level set, which we denote by $\Phi(x,y)=\Phi(a,p)=\Phi(1,q)=c$. We now need to analyze when these points can be connected by a portion of the level set which stays graphical. Note that each level set is made up of  several components. If  $c=0$, then the level set consists of $n$ lines through the origin, each line   $\frac\pi n$ rotation of the next. Since $a>1>0$, in this case the points $a+ip$ and  $1+iq$ each lie on a ray in ${\mathcal R}_n$ (although we do not know yet if they lie on the same ray).

If $c\neq0$, then the level set looks like $n$ distinct curves lying in alternating sectors (see Figure \ref{fig:LevelSets}). In order for there to exists a function lying on a level curve connecting $(1,q)$ to $(a,p)$, the boundary points need to be on the same component of the level set, which we now prove.
\begin{proposition}
\label{mainprop}
If the classes $[\omega], [\alpha]$ are stable in the sense of Lemma \ref{stabilitylem}, then the points $(1,q)$ and $(a,p)$ both lie on the same component of the level set $\Phi(x,y)=c$.
\end{proposition}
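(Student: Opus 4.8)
\noindent\emph{Proof plan.} The idea is to turn the statement into a short combinatorial fact about floor functions. First I would translate the geometry into angular data. Writing a nonzero $z=|z|e^{i\psi}$ one has $\Phi(z)={\rm Im}(e^{-i\hat\theta}z^n)=|z|^n\sin(n\psi-\hat\theta)$; hence for $c\ne 0$ each component of $\{\Phi=c\}$ is the graph $|z|=(c/\sin(n\psi-\hat\theta))^{1/n}$ over a maximal interval of $\psi$ on which $\sin(n\psi-\hat\theta)$ has the sign of $c$, while for $c=0$ the components are the $n$ lines $\{\sin(n\psi-\hat\theta)=0\}$. Since $a>1$, the points $z_1=1+iq$ and $z_2=a+ip$ of Lemma~\ref{stabilitylem} (which are the boundary points $(1,q)$ and $(a,p)$ under $\mathbb{R}^2\cong\mathbb{C}$) have strictly positive real part, so $\arg(z_1),\arg(z_2)\in(-\tfrac\pi2,\tfrac\pi2)$, and the zeros of $\psi\mapsto\sin(n\psi-\hat\theta)$ in that range are exactly the rays of $\mathcal R_n$. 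Thus $z_1,z_2$ lie on the same component of $\{\Phi=c\}$ if and only if no ray of $\mathcal R_n$ lies strictly between them and, when $c=0$, additionally $\arg(z_1)=\arg(z_2)$.

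Now fix any lift of $\hat\theta$ to $\mathbb{R}$ and set $\delta_\ell:=\tfrac\pi2-\arg(z_\ell)\in(0,\pi)$ for $\ell=1,2$ and $A:=\tfrac{n\pi}2-\hat\theta$. A one-line computation gives ${\rm Im}\big(i^{\,n-k}e^{-i\hat\theta}z_\ell^{\,k}\big)=|z_\ell|^k\sin(A-k\delta_\ell)$ and $\Phi(z_\ell)=|z_\ell|^n\sin(A-n\delta_\ell)$. For $k=0,1,\dots,n$ put $M_k^{(\ell)}:=\big\lfloor (A-k\delta_\ell)/\pi\big\rfloor$. Three observations: (i) $M_0^{(1)}=M_0^{(2)}=\lfloor A/\pi\rfloor$; (ii) because $0<\delta_\ell<\pi$ the number $A-k\delta_\ell$ drops by less than $\pi$ when $k$ increases by $1$, so $M_{k+1}^{(\ell)}\in\{M_k^{(\ell)}-1,\,M_k^{(\ell)}\}$; (iii) since ${\rm sign}(\sin x)=(-1)^{\lfloor x/\pi\rfloor}$ when $\sin x\ne 0$, stability in the sense of Lemma~\ref{stabilitylem} says precisely that for each $k\in\{1,\dots,n-1\}$ the numbers $\sin(A-k\delta_1)$ and $\sin(A-k\delta_2)$ are nonzero with the same sign, i.e.\ $M_k^{(1)}\equiv M_k^{(2)}\pmod 2$. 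Finally, Lemma~\ref{levelset} gives $\Phi(z_1)=\Phi(z_2)=c$, so $\sin(A-n\delta_1)$ and $\sin(A-n\delta_2)$ have the same sign as well.

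The combinatorial heart is then the remark that two integer sequences which agree at step $0$, change by $0$ or $-1$ at each step, and have equal parities at every step must coincide: if $M_k^{(1)}=M_k^{(2)}$ then $M_{k+1}^{(1)},M_{k+1}^{(2)}$ both lie in $\{M_k^{(1)}-1,M_k^{(1)}\}$, hence differ by at most one, and a difference of one is excluded by equality of parities. Using (i), (ii) and the parity agreement (iii) for $k=1,\dots,n-1$ this gives $M_k^{(1)}=M_k^{(2)}$ for all $k=0,\dots,n-1$. For $k=n$ I would split cases. If $c\ne 0$, then $\sin(A-n\delta_\ell)\ne 0$, so the $k=n$ sign agreement upgrades to $M_n^{(1)}\equiv M_n^{(2)}\pmod 2$; combined with (ii) and $M_{n-1}^{(1)}=M_{n-1}^{(2)}$ this forces $M_n^{(1)}=M_n^{(2)}$, so $(A-n\delta_1)/\pi$ and $(A-n\delta_2)/\pi$ lie in one interval $(m,m+1)$ and no ray of $\mathcal R_n$ separates $z_1$ from $z_2$. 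If $c=0$, then $\Phi(z_\ell)=0$ forces $A-n\delta_\ell=m_\ell\pi$ with $m_\ell\in\mathbb{Z}$; writing $A-n\delta_\ell=(A-(n-1)\delta_\ell)-\delta_\ell$ and using $M_{n-1}^{(1)}=M_{n-1}^{(2)}=:M$ together with $0<\delta_\ell<\pi$ shows $A-n\delta_\ell\in((M-1)\pi,(M+1)\pi)$, whence $m_1=m_2=M$ and $\arg(z_1)=\arg(z_2)$. In either case the reduction of the first paragraph shows $z_1$ and $z_2$ lie on the same component of $\{\Phi=c\}$. (This alternation of floors is exactly the geometric phenomenon recorded in Proposition~\ref{comb}.)

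The step I expect to require the most care is the opening reduction: matching the connected components of $\{\Phi=c\}\subset\mathbb{R}^2$ with the sectors cut out by $\mathcal R_n$, handling the degenerate level $c=0$, and checking that nothing goes wrong near the directions $\arg=\pm\tfrac\pi2$ --- which is harmless precisely because $z_1,z_2$ have positive real part, so they avoid those directions. By contrast, once the problem is phrased through the sequences $M_k^{(\ell)}$, the rest is the single one-step parity argument and needs no further geometric input.
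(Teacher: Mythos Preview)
Your argument is correct, and it is a genuinely different route from the paper's. The paper argues by contradiction: assuming $z_1,z_2$ lie on distinct components, it first extracts a ray of $\mathcal R_{n-1}$ between them, and then runs a geometric induction downward in $k$ (using the stability inequality at level $k$ together with the ray–alternation Proposition~\ref{comb}) until the single ray $r^1_1$ is forced to separate $z_1$ from $z_2$, contradicting stability at $k=1$. Proposition~\ref{comb} is the workhorse there.

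Your floor–function encoding replaces that geometric machinery with a one–line step/parity argument. Once you set $\delta_\ell=\tfrac\pi2-\arg(z_\ell)$ and $M_k^{(\ell)}=\lfloor(A-k\delta_\ell)/\pi\rfloor$, the ray alternation of Proposition~\ref{comb} is exactly the statement $M_{k+1}^{(\ell)}\in\{M_k^{(\ell)}-1,M_k^{(\ell)}\}$, and the stability condition at level $k$ becomes the parity constraint $M_k^{(1)}\equiv M_k^{(2)}\pmod 2$. This buys you a self-contained proof that does not invoke Proposition~\ref{comb} at all; it also handles the $c=0$ case cleanly via the observation that the only integer multiple of $\pi$ in $((M-1)\pi,(M+1)\pi)$ is $M\pi$. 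The paper's proof, by contrast, is more pictorial and makes the role of the sectors $\mathcal S_k$ explicit, which dovetails with the later use of $\mathcal R_{n-1}$ to rule out vertical tangents. Both arguments are the same induction in spirit; yours is the algebraic distillation.

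Two small points worth tightening when you write it out: in the opening reduction, you should state explicitly that a connected component of $\{\Phi=c\}$ (for $c\ne0$) is globally the radial graph over one open arc between consecutive zeros of $\sin(n\psi-\hat\theta)$, so that ``same component'' really is equivalent to ``no ray of $\mathcal R_n$ strictly between $\arg z_1$ and $\arg z_2$''; and in the $c=0$ case, note that $A-(n-1)\delta_\ell\notin\pi\mathbb Z$ follows from the \emph{strict} inequality in the stability assumption at $k=n-1$, which is what gives the open interval $(M\pi,(M+1)\pi)$ rather than the half-open one.
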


\begin{proof} Set $z_1=(1+iq)$ and $z_2=(a+ip)$. We argue by contradiction, and assume that $z_1$ and $z_2$ do not lie on the same component of the level set.  As a first step we show that there exists a ray $r^j_{n-1}\in\mathcal R_{n-1}$ lying between $z_1$ and $z_2$. To see this, note that if $c=0$, then by assumption $z_1$ and $z_2$ lie on distinct rays in ${\mathcal R}_n$. Applying Proposition \ref{comb} for $k=n$ we see exists a ray $r^j_{n-1}\in\mathcal R_{n-1}$   between $z_1$ and $z_2$.

In the case that $c\neq 0$, the level set looks like $n$ distinct curves lying in alternating sectors with angle $\frac\pi n$. If $z_1$ and $z_2$ do not lie on the same component, since the components are in alternating sectors, there exists at least one empty sector   between the  sector containing $z_1$ and the sector containing  $z_2$. The boundary of this empty sector consists of two rays $r^{j+1}_n$ and $r^{j}_n$, and thus these two rays   lie between $z_1$ and $z_2$.  Applying Proposition \ref{comb} for $k=n$ proves existence of a ray $r^{j}_{n-1}$ between $r^{j+1}_n$ and $r^{j}_n$, and thus $r^{j}_{n-1}$ lies between $z_1$ and $z_2$. 

We now apply an induction argument and show that if there exists a ray $r^j_k\in\mathcal R_{k}$ lying between $z_1$ and $z_2$, then there exists a ray $r^{\ell}_{k-1}\in\mathcal R_{k-1}$ lying between $z_1$ and $z_2$ as well. Note that by the stability assumption, either $z_1$ and $z_2$ both lie in $\mathcal S_k$, or they both lie in   $\mathcal S_k^-$ (depending on whether the inequality is positive or negative). The key to this proposition is that in either case, the sets containing both $z_1$ and $z_2$ consists of alternating sectors. Specifically, given that there exists a ray $r^j_k$ lying between $z_1$ and $z_2$, then $z_1$ and $z_2$ must lie in different sectors of $\mathcal S_k$ (or  $\mathcal S_k^-$). Because these sectors alternate, there must be an empty sector between    $z_1$ and $z_2$. The boundary of this empty sector consists of   two rays in $\mathcal R_{k}$, which we denote by $r^{\ell+1}_k$ and $r^\ell_k$. These two rays lie between $z_1$ and $z_2$, and Proposition \ref{comb} gives that the ray $r^{\ell}_{k-1}$ lies between $z_1$ and $z_2$ as well.

Thus, given that there exists a  ray $r^{j}_{n-1}$  between $z_1$ and $z_2$, applying the induction argument $n-2$ times gives that the ray $r^1_1$ lies between $z_1$ and $z_2$. However, the ray $r^1_1$ divides the space $\{z\in\mathbb C|- \frac{\pi}2 \leq {\rm arg} (z)<\frac{\pi}2\}$ into two regions, ${\mathcal S_1}$ and ${\mathcal S_1}^c$. Thus it is impossible that $z_1$ and $z_2$ are both in   $\mathcal S_1 $ (or  $\mathcal S_1^-$), while also lying on opposite sides of $r^1_1$. This  gives a contradiction, proving the proposition.

We remark that the proof may end sooner in the special case that $r^1_1$ is the negative $y-$axis. In this case, the ray $r^2_2$ is also the negative $y-$axis (see the proof of Proposition \ref{comb}), so in fact the ray $r^1_2$ must divide the space $\{z\in\mathbb C|- \frac{\pi}2 \leq {\rm arg} (z)<\frac{\pi}2\}$ into two regions. Thus the contradiction occurs at this step, with $k=2$, rather than $k=1$.
\end{proof}

To finish the proof of the Theorem \ref{maintheorem}, we need to show that there exists a function $f(x)$ with $f(1)=q$ and $f(a)=p$, so that the graph of the function lies on the level curve $\Phi(x,y)=c$. We have just demonstrated that the points $(1,q)$ to $(a,p)$ lie on the same component of the level set $\Phi(x,y)=c$, so all that remains to be shown is that the level curve connecting $(1,q)$ to $(a,p)$ does not have vertical slope.

First, if $c=0$, then the level curves of $\Phi(x,y)=0$ consist of $n$ rays in ${\mathcal R}_n$. The above proposition shows that $(1,q)$ to $(a,p)$ lie on the same ray $r^j_n$. Since the ray never has vertical slope, in this case we see right away that there exists a linear function   $f(x)$ with $f(1)=q$ and $f(a)=p$, proving the theorem.

In general, the points where the tangent line to $\Phi(x,y)=c$ has vertical slope are given by
\be
\frac{\partial}{\partial y}\Phi(x,y)=\frac{\partial}{\partial y}{\rm Im}\left(e^{-i\hat\theta}(x+iy)^n\right)={\rm Im}\left(ine^{-i\hat\theta}(x+iy)^{n-1}\right)=0.\nonumber
\ee
Dividing by $n$ and writing $z=x+iy$, these points satisfy
\be
{\rm Im}\left(ie^{-i\hat\theta}z^{n-1}\right)=0,\nonumber
\ee
and so by definition of ${\mathcal R}_{n-1}$ we see they lie on a ray $r^j_{n-1}$ (see Figure \ref{fig:Intersection}). Thus in order to show that the level curve connecting $(1,q)$ to $(a,p)$ does not have vertical slope, the curve can not pass over a ray $r^j_{n-1}$. By our stability assumption, both $z_1$ and $z_2$  can not be on opposite sides of the ray $r^j_{n-1}$. As a result the level curve connecting $(1,q)$ to $(a,p)$ does not have vertical slope, and thus there exists a   $f(x)$ with $f(1)=q$ and $f(a)=p$ that solves the ODE \eqref{ODE1}. Thus we have demonstrated that if the classes $[\omega], [\alpha]$ are stable, a solution to the deformed Hermitian-Yang-Mills equation exists.  This concludes the proof of Theorem \ref{maintheorem}.
\begin{figure}[!h]
  \includegraphics[width=3in]{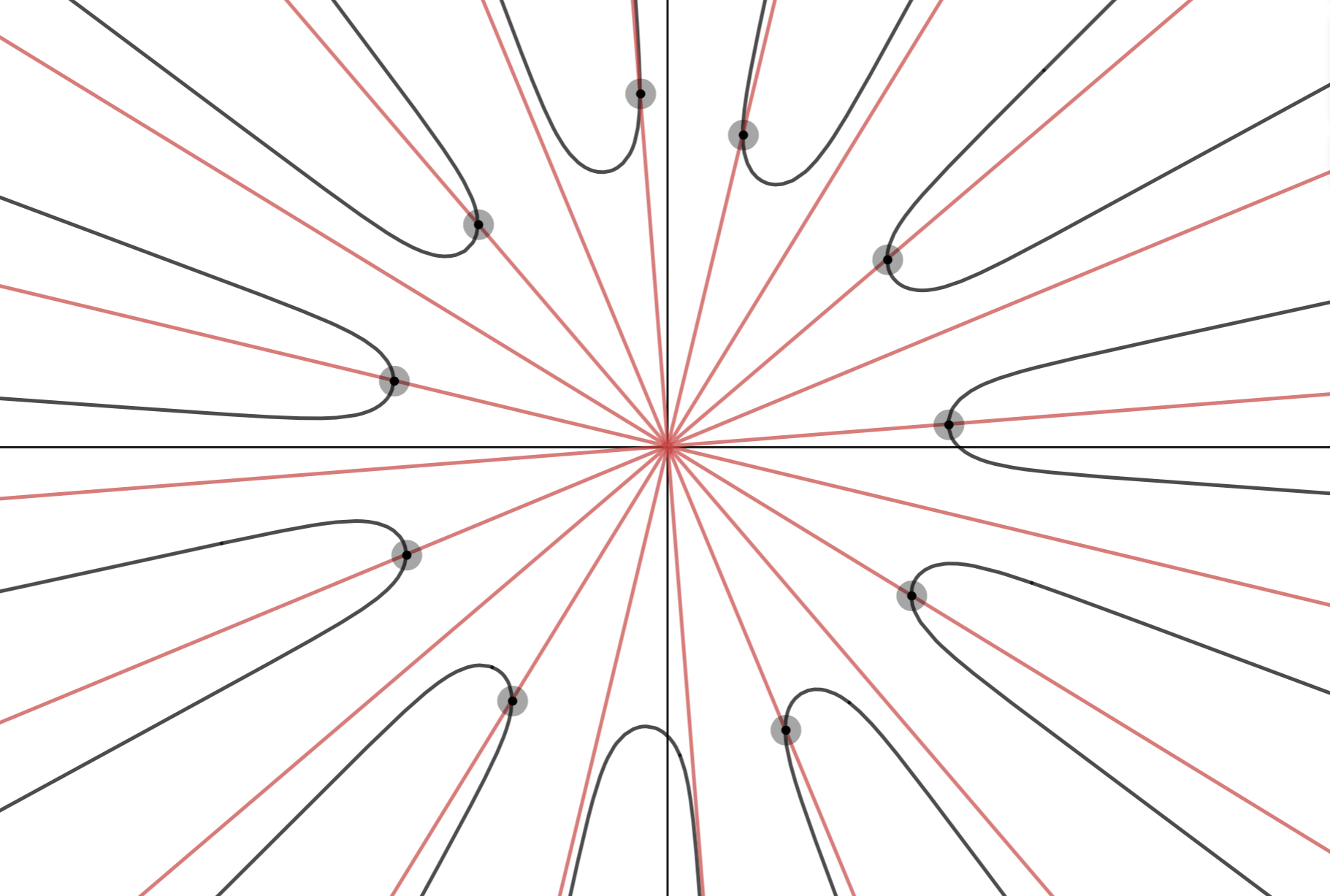}
  \caption{The intersection of a level set $\Phi(x,y)=c$ with the lines defined by ${\rm Im}\left(ie^{-i\hat\theta}z^{n-1}\right)=0$ occurs where the level set has vertical slope.}
  \label{fig:Intersection}
\end{figure}

\section{Lifting the average angle}
\label{5}

 Recall that the average angle  $\hat\theta$ is defined to be the argument of $\zeta_X=(a+ip)^n-(1+iq)^n$, which is a priori only $S^1$ valued (note that changing $\hat\theta$ by $2\pi$ does not effect equation \eqref{dHYM1}). This is in contrast to the pointwise angle $\Theta_\omega(\alpha)$, which as a sum of arctangents lifts to $\mathbb R$. Since \eqref{dHYM1} can be reformulated as  \eqref{dHYM2}, a solution to \eqref{dHYM2} specifies a unique lift of $\hat\theta$  to $\mathbb R$. A slightly weaker (but nevertheless analytic)  assumption to specify a lift would be the existence of a representative $\alpha_0$   for which  the point-wise angle $\Theta_\omega(\alpha_0)$ has oscillation  less that  $\pi$. This leads to the following question: is it possible to identify how $\hat\theta$ lifts to $\mathbb R$ from the initial data $a,p$ and $q$ alone, without needing to know existence of a specific representative of $[\alpha]$?

 
 In general the answer is not known, but there are special cases in which a lift exists. Collins-Xie-Yau consider the following situation in \cite{CXY}. Define a path $\gamma(t):[0,1]\rightarrow\mathbb C$ via
 \be
\gamma(t)=\int_X(\omega+it\alpha)^n.\nonumber
 \ee
At the starting time $\gamma(0)={\rm Vol}(X)=a^n-1$ is a positive real number,  which we  define to have zero argument. Also $\gamma(1)=\zeta_X$. Then, as long as $\gamma(t)\in \mathbb C^*$ for all $t\in[0,1]$, letting $t$ run from $0$ to $1$, we can count the number of times $\gamma(t)$ winds around the origin to define a lift of $\hat\theta$ to $\mathbb R$. 

Unfortunately there are examples where the angle $\hat\theta$ is well defined, but $\gamma(t)$ passes through the origin, so $\hat\theta$ can not be lifted using this method. We construct such an example in dimension $3$. First, fix a real number $q>\sqrt 3$.  Define an angle $\theta=\frac{2\pi}3-{\rm arctan}(q)$, and set $a=(\sqrt{q^2+1}){\rm cos}(\theta)$ and $p=-(\sqrt{q^2+1}){\rm sin}(\theta)$. Note that the choice $q>\sqrt 3$ ensures $a>1$. By construction $1+iq$ and $a+ip$ now satisfy \eqref{zero} for $k=1$, and therefore $(a+ip)^3=(1+iq)^3$. To complete our example, consider the initial data 
\be
[\omega]= a[H]-[E]\qquad{\rm and}\qquad [\alpha]= 2p[H]-2q[E],\nonumber
\ee
with $a$ and $p$ defined as above. Now, initially $\gamma(1)\neq 0$, since the arguments of $1+i2q$ and $a+i2p$ are  greater than $\frac{2\pi}3$ apart, while $\gamma(\frac12)=0$. Of course, one could always choose another path that avoids the origin, however then the lift will depend on the choice of the path. 

We remark that similar examples where the lift can not be defined exist in dimension 3 or higher. In dimension 2, the angle $\hat\theta$ always lifts, since the arguments of $1+itq$ and $a+itp$ can never be distance $\pi$ apart, so the path $\gamma(t)$ never passes through the origin. This is a special case of the fact that on a general K\"ahler surface, the angle $\hat\theta$ always lifts  by the Hodge Index Theorem \cite{CXY}.

One difficulty with the above method is that even if a lift of $\hat\theta$ exists,   in practice it can be hard to verify. Due to the specific geometry of our setup, we introduce a another notion of a lifted angle.

Assume that $\hat\theta$ lies in the branch cut $-\pi\leq \hat\theta<\pi$. 
Suppose that for a given choice of $[\omega]$ and $[\alpha]$, we have
\be
\label{nearby}
|{\rm arg}(a+ip)-{\rm arg}(1+iq)|<\frac{\pi}n.
\ee
We now lift $\hat\theta$ to $\mathbb R$ as follows. Construct two smooth strictly increasing functions $\rho_1(t),\rho_2(t):[0,1]\rightarrow[0,1]$, so that 
\be
|{\rm arg}(a+i\rho_1(t)p)-{\rm arg}(1+i\rho_2(t)q)|<\frac{\pi}n,\nonumber
\ee
and $ \rho_1(0)=\rho_2(0)=0$ while $\rho_1(1)=\rho_2(1)=1$. 
To see this can always be done, start with two points in a sector of angle $\frac{\pi}n$, then rotate that sector so it contains the positive $x-$axis. It is easy to see that during this rotation  the points can simultaneously be deformed to the positive $x-$axis in such a way that they stay within the sector, and their $x-$coordinate remains fixed. The reverse of this deformation determines the two functions $\rho_1(t),\rho_2(t)$. For all $t$ the complex numbers $(a+i\rho_1(t) p)^n$ and $(1+i\rho_2(t) q)^n$ lie in the same half-plane, and so the 
path $\ti\gamma(t)=(a+i\rho_1(t) p)^n-(1+i\rho_2(t) q)^n$ never passes through the origin and has  a winding number $k\in\mathbb Z$. We then define the lift of $\hat\theta$ (denoted $\hat\Theta_X$), by
\be
\label{lifted}
\hat\Theta_X:=\hat\theta+2\pi k \in(-n\frac \pi 2, n\frac\pi 2).
\ee
Again we emphasize that this lifted angle depends only on $a,p$ and $q$, and not on any representatives of the classes $[\omega]$ and $[\alpha]$. One advantage of using the above lifted angle is that our stability implies such a lift exists. 
\begin{proposition}
Suppose the pair $[\omega],[\alpha]$ is stable in the sense of Lemma \ref{stabilitylem}. Then the angle $\hat\theta$ has a well defined lift  $\hat\Theta_X$ given  by \eqref{lifted}.
\end{proposition}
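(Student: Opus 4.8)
The plan is to reduce the proposition to the single assertion that stability implies the inequality \eqref{nearby}, i.e. $|{\rm arg}(a+ip)-{\rm arg}(1+iq)|<\frac{\pi}{n}$. Indeed, once \eqref{nearby} is known, the construction carried out immediately before \eqref{lifted} (with $\hat\theta$ normalized to lie in $[-\pi,\pi)$ as there) applies verbatim: one gets auxiliary functions $\rho_1,\rho_2$, the path $\tilde\gamma$ misses the origin, its winding number $k$ is defined, and $\hat\Theta_X$ is produced. So the whole argument is: stability $\Rightarrow$ \eqref{nearby}.

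To prove this, I would set $z_1=1+iq$, $z_2=a+ip$ and combine two facts already at hand. First, by Lemma \ref{levelset} the points $z_1$ and $z_2$ lie on a common level set $\{\Phi(x,y)=c\}$ with $c=\Phi(1,q)=\Phi(a,p)$. Second, by Proposition \ref{mainprop}, whose hypothesis is precisely stability in the sense of Lemma \ref{stabilitylem}, the points $z_1$ and $z_2$ lie on the \emph{same connected component} of this level set. It then suffices to observe that every component of $\{\Phi=c\}$ meeting the open right half-plane $\{{\rm Re}\,z>0\}$ is trapped in an angular sector of width at most $\frac{\pi}{n}$. Writing $z=re^{i\phi}$ gives $\Phi(z)=r^{n}\sin(n\phi-\hat\theta)$. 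If $c=0$, then $\{\Phi=0\}\cap\{-\frac{\pi}{2}\le{\rm arg}\,z<\frac{\pi}{2}\}$ is exactly the family of rays $\mathcal R_n$, so $z_1$ and $z_2$ lie on one and the same ray and ${\rm arg}\,z_1={\rm arg}\,z_2$. If $c\neq0$, a component lies in a single band on which $n\phi-\hat\theta$ stays strictly between two consecutive integer multiples of $\pi$, an arc of angular width $\frac{\pi}{n}$ whose two boundary rays belong to $\{\Phi=0\}$ and are approached only as $r\to\infty$; since $z_1$ and $z_2$ are finite points (indeed ${\rm Re}\,z_1=1>0$ and ${\rm Re}\,z_2=a>1$) their arguments are strictly interior to that arc. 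Either way $|{\rm arg}\,z_1-{\rm arg}\,z_2|<\frac{\pi}{n}$, which is \eqref{nearby}.

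Granting \eqref{nearby}, I would then run the construction preceding \eqref{lifted}: choose smooth strictly increasing $\rho_1,\rho_2:[0,1]\to[0,1]$ with $\rho_i(0)=0$, $\rho_i(1)=1$ and $|{\rm arg}(a+i\rho_1(t)p)-{\rm arg}(1+i\rho_2(t)q)|<\frac{\pi}{n}$ for all $t$. Because $a\neq1$ the numbers $a+i\rho_1(t)p$ and $1+i\rho_2(t)q$ are distinct, and because their arguments differ by less than $\frac{\pi}{n}<\frac{2\pi}{n}$ their $n$-th powers are distinct as well, so $\tilde\gamma(t)=(a+i\rho_1(t)p)^{n}-(1+i\rho_2(t)q)^{n}$ never passes through $0$ and has a winding number $k\in\mathbb Z$; then $\hat\Theta_X:=\hat\theta+2\pi k\in(-n\frac{\pi}{2},n\frac{\pi}{2})$ is a genuine lift of $\hat\theta$. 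For full rigor I would also note that $k$ is independent of the auxiliary pair $(\rho_1,\rho_2)$, because the admissible pairs form a path-connected family (the constraint is convex in the angular variables, so any two admissible choices are joined through admissible ones), whence the associated paths $\tilde\gamma$ are homotopic rel endpoints in $\mathbb C^*$ and carry the same winding number. This makes $\hat\Theta_X$ well defined.

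The crux of the argument is simply recognizing that Lemma \ref{levelset} and Proposition \ref{mainprop}, together with the level-set geometry of Section \ref{2}, already contain everything; the one place that needs a little care is the geometric claim in the middle paragraph — that a finite point on a component of $\{\Phi=c\}$ with $c\neq0$ has argument strictly interior to a $\frac{\pi}{n}$-sector — and this is immediate from the polar form of $\Phi$ once one records that the bounding rays are attained only in the limit $r\to\infty$. Beyond that I do not anticipate a real obstacle.
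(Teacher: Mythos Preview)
Your proposal is correct and follows essentially the same route as the paper: both reduce the proposition to verifying \eqref{nearby} and obtain this from the argument of Proposition \ref{mainprop}, the paper by citing its induction to rule out two rays of $\mathcal R_n$ between $z_1,z_2$, and you by citing its conclusion (same component) together with the polar form $\Phi(re^{i\phi})=r^{n}\sin(n\phi-\hat\theta)$ to see each component sits in an open $\tfrac{\pi}{n}$-sector. Your additional paragraph checking that the winding number $k$ is independent of the choice of $(\rho_1,\rho_2)$ is a welcome point the paper leaves implicit.
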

\begin{proof}
By the induction argument given in Proposition \ref{mainprop}, we know from our stability assumption that the two points $(a+ip)$ and $(1+iq)$ can not have two rays from $\mathcal R_n$ between them. Since the rays in $\mathcal R_n$ are all $\frac\pi n$ rotations of each other, this verifies \eqref{nearby}, which allows us to define $\hat\Theta_X$.
\end{proof}

We expect that in general, being able to determine the lifted angle and specifying the branch will be a key step to solving the deformed Hermitian Yang Mills equation. This expectation is motivated by Theorem \ref{theorem2}, which shows the importance of the lifted angle in our specific case.

First, we note that for any subvariety $H^{n-k}$ or $(-1)^{n-k-1}E^{n-k}$, the lifted restricted angle is always well defined. Specifically, if we assume  $z_1=1+iq$ and $z_2=a+ip$ always have arguments in $(-\frac\pi2,\frac\pi2),$ then the lifted angle associated to each subvariety is given by
\be
\label{liftedsubvariety}
\hat\Theta_{(-1)^{n-k-1}E^{n-k}}=k\,{\rm arg}(z_1)\qquad{\rm and}\qquad \hat\Theta_{H^{n-k}}=k\,{\rm arg}(z_2).
\ee
We now present the proof of Theorem  \ref{theorem2}.

To begin, assume that for a given choice of $a, p, q$ there exists a lifted angle $\hat\Theta_X\in\mathbb R$. Furthermore assume that  $V^{n-1}$ (which can be either $H$ or $E$) satisfies 
\be
\label{strongstab}
\hat\Theta_X+\frac\pi2>\hat\Theta_{V^{n-1}} >\hat\Theta_X-\frac\pi2.
\ee
Using \eqref{liftedsubvariety} this implies 
\be
\frac{1}{n-1}\left(\hat\Theta_X+ \frac\pi2\right)>{\rm arg}(z_\ell) >\frac{1}{n-1}\left(\hat\Theta_X- \frac\pi2\right).\nonumber
\ee
for $\ell\in\{1,2\}$. Thus the difference between ${\rm arg}(z_1)$ and ${\rm arg}(z_2)$ is at most $\frac\pi{n-1}$. By Lemma \ref{levelset} both $z_1$ and $z_2$ lie on the same level set of $\Phi$, and since this level set consists of curves in alternating sectors with angle $\frac \pi n$, the angle bound of  $\frac\pi{n-1}$ tells us that either $z_1$ and $z_2$ lie in the same component of the level set, or they lie on two adjacent components with an empty sector in between. We must rule out the latter possibility.

Note that \eqref{strongstab} implies 
\be
\pi> \frac\pi2-\hat\Theta_X+(n-1){\rm arg}(z_\ell) >0.
\ee
This is equivalent to
\be
{\rm Im}\left(ie^{-i\hat\theta}(z_\ell)^{n-1}\right)>0\nonumber
\ee
for $\ell\in\{1,2\}$, which  is just stability in the sense of Lemma  \ref{stabilitylem} for  $k=n-1.$ So $z_1$ and $z_2$ lie in $\mathcal S_{n-1}$. Right away this rules out the possibility that they lie on distinct adjacent rays in $\mathcal R_n$, since any two such rays will never both be contained in $\mathcal S_{n-1}$. We can also rule out the case where $z_1$ and $z_2$ lie in two adjacent components which are not rays. In this case, there will be exactly two rays in $\mathcal R_n$ between $z_1$ and $z_2$, and thus by Proposition \ref{comb} at least one ray in $\mathcal R_{n-1}$. Yet because the sectors in $\mathcal S_{n-1}$ alternate, there must in fact be two rays in $\mathcal R_{n-1}$ between $z_1$ and $z_2$. But this is impossible if the difference between ${\rm arg}(z_1)$ and ${\rm arg}(z_2)$ is at most $\frac\pi{n-1}$.

Thus  $z_1$ and $z_2$ lie in the same component of the level set of $\Phi$. Furthermore, just as in the proof of Theorem  \ref{maintheorem}, stability in the sense of Lemma  \ref{stabilitylem} for  $k=n-1$  rules out the possibility of a vertical slope on the level curve connecting $z_1$ and $z_2$, and so a solution to the deformed Hermitian-Yang-Mills equation exists.

Conversely, suppose  for a given $a, p, q$ there exists a solution to equation  \eqref{dHYM2}. As explained above, because the pointwise angle is a sum of arctangents, a solution to  \eqref{dHYM2} specifies a uniques lift $\hat\Theta_X\in\mathbb R$. Additionally, restricting a solution to either  $H$ or $E$, we lose one arctangent from the sum that makes up the pointwise angle. Since the image of arctangent lies in $(-\frac\pi2,\frac\pi2)$, the average angle on each of these divisors must satisfy \eqref{strongstab}. For details see Lemma 8.2 in \cite{CJY}. This completes the proof of Theorem \ref{theorem2}.

We conclude the paper by noting the distinction between the stability from Conjecture \ref{theconjecture} and our    stability in the sense of Lemma  \ref{stabilitylem}. Although the original conjecture is only stated for the supercritical phase case, It is not too difficult to see, looking at the proof of Proposition 8.3 in \cite{CJY}, that  it can be generalized to any phase as 
\be
\hat\Theta_X+(n-k)\frac\pi2>\hat\Theta_{V^{k}} >\hat\Theta_X-(n-k)\frac\pi2,\nonumber
\ee
provided that all associated phase angles lift. Thus one difference we see right away is that Conjecture \ref{theconjecture} requires all lifted angles to exist, while this is not true of our stability.  Furthermore, we see the above inequality forces $z_1$ and $z_2$ between two rays, whereas  Lemma  \ref{stabilitylem} places them in alternating sectors. When $k=n-1$, the above inequality is a stronger condition than what arises from  Lemma  \ref{stabilitylem}. However, when $k<n-1$, the rays fail to match up. It would be interesting to explore this phenomenon more in the future.
 \end{normalsize}

\end{document}